\documentclass[10pt,a4paper]{amsart}
\usepackage[english]{babel}
\usepackage{amsmath,amsthm,mathrsfs,hyperref}
\usepackage{amssymb}
\usepackage{aliascnt}
\usepackage{xspace}
\usepackage{tikz-cd} 
\usepackage{comment}

\DeclareMathOperator{\dom}{dom}
\DeclareMathOperator{\range}{range}

\DeclareMathOperator{\crit}{crit}

\DeclareMathOperator{\cf}{cf}

\DeclareMathOperator{\Add}{Add}
\DeclareMathOperator{\Ult}{Ult}
\DeclareMathOperator{\Hull}{Hull}

\newcommand{\ZFC}{{\rm ZFC}\xspace}
\newcommand{\ZF}{{\rm ZF}\xspace}

\newcommand{\Ord}{{\rm Ord}\xspace}

\usepackage[textsize=footnotesize,color=yellow!40, bordercolor=white]{todonotes}

\newcommand{\AD}{\ensuremath{\operatorname{AD}} }
\newcommand{\DC}{\ensuremath{\operatorname{DC}} }

\newcommand{\Col}{\ensuremath{\operatorname{Col}} }

\newcommand{\trcl}{\ensuremath{\operatorname{trcl}} }

\newcommand{\sta}{*}

\DeclareMathOperator{\len}{lh}
\newcommand{\lh}{\len}

\newcommand{\cP}{\mathcal{P}}
\newcommand{\Pot}{\mathcal{P}}
\newcommand{\cM}{\mathcal{M}}
\newcommand{\cN}{\mathcal{N}}

\newcommand{\cR}{\mathcal{R}}

\newcommand{\bR}{\mathbb{R}}
\newcommand{\bP}{\mathbb{P}}

\newtheorem{theorem}{Theorem}
\newaliascnt{example}{theorem}

\aliascntresetthe{example}
\newaliascnt{fact}{theorem}

\aliascntresetthe{fact}
\newaliascnt{corollary}{theorem}
\newtheorem{corollary}[corollary]{Corollary}
\aliascntresetthe{corollary}
\newaliascnt{lemma}{theorem}
\newtheorem{lemma}[lemma]{Lemma}
\aliascntresetthe{lemma}
\newaliascnt{claim}{theorem}
\newcounter{cl}[theorem]
\newtheorem{claim}[cl]{Claim}
\aliascntresetthe{claim}
\newtheorem{prop}[theorem]{Proposition}
\theoremstyle{definition}
\newaliascnt{definition}{theorem}
\newtheorem{definition}[definition]{Definition}
\aliascntresetthe{definition}
\newtheorem{question}{Question}

\newtheorem*{theorem*}{Theorem}

\newtheorem*{example*}{Example}

\title{Perfect Subtree Property for weakly compact cardinals}

\author{Yair Hayut} 
\address[Y. Hayut]{Einstein Institute of Mathematics, The Hebrew University of Jerusalem. Jerusalem 91904, Israel}
\email{yair.hayut@mail.huji.ac.il}
\thanks{The first-listed author was supported by Austrian Science Fund (FWF) Lise Meitner grant 2650-N35}

\author{Sandra M\"uller}
\address[S. M\"uller]{Institut f\"ur Diskrete Mathematik und Geometrie, TU Wien, Wiedner Hauptstra{\ss}e 8-10/104, 1040 Wien, Austria, \and Institut f\"ur Mathematik, Universit\"at Wien, Kolingasse 14-16, 1090 Wien, Austria}
\email{sandra.mueller@tuwien.ac.at}
\urladdr{http://www.logic.univie.ac.at/~smueller/}
\thanks{The second-listed author was supported by L'OR\'{E}AL Austria, in collaboration with the Austrian UNESCO Commission and in cooperation with the Austrian Academy of Sciences - Fellowship \emph{Determinacy and Large Cardinals} and Elise Richter grant number V844 of the FWF}

\date{\today}

\begin{document}
\begin{abstract}
We investigate the consistency strength of the statement: $\kappa$ is weakly compact and there is no tree on $\kappa$ with exactly $\kappa^{+}$ many branches. We show that this statement fails strongly (in the sense that there is a \emph{sealed tree} with exactly $\kappa^{+}$ many branches) if there is no inner model with a Woodin cardinal. Moreover, we show that for a weakly compact cardinal $\kappa$ the nonexistence of a tree on $\kappa$ with exactly $\kappa^{+}$ many branches and, in particular, the Perfect Subtree Property for $\kappa$, implies the consistency of $\AD_{\bR} + \DC$.  
\end{abstract}
\maketitle
\section{Introduction}
Trees and their collections of branches play an important role in topology and infinitary combinatorics. Indeed, closed subsets of the Cantor Set $2^\omega$  are exactly the collections of branches of subtrees of $2^{<\omega}$. By the Cantor-Bendixson theorem, subtrees of the binary tree on $\omega$ satisfy a dichotomy - either the tree has countably many branches or there is a perfect subtree (and in particular, the number of branches of the tree is the continuum, regardless of the size of the continuum). Equivalently, if a tree $T\subseteq 2^{<\omega}$ has uncountably many branches and $\mathbb{P}$ is a forcing notion that adds a new real then $\mathbb{P}$ adds a new branch through $T$. 

\begin{definition}
Let $\kappa$ be a regular cardinal. The \emph{Branch Spectrum} of $\kappa$ is the set \[\mathfrak{S}_\kappa = \{|[T]| \mid T \text { is a normal }\kappa\text{-tree}\}.\]
\end{definition}
For the definition of normal trees see the beginning of Section \ref{section: upper bounds}. A first example is $\mathfrak{S}_\omega = \{2^{\aleph_0}\}$. The spectrum $\mathfrak{S}_{\omega_1}$ was first studied by Silver \cite{Si71}, showing the independence of the Kurepa Hypothesis. Later, questions about the possible values of this spectrum were addressed by Shelah and Jin in \cite{ShJi92} and more recently a complete characterization was given by Po{\'o}r and Shelah \cite{Po, PoSh20}. By \cite{SiSo}, the branch spectrum is related to the model theoretic spectrum of maximal models of $\mathcal{L}_{\omega_1,\omega}$-sentences.  

For uncountable cardinals $\kappa$, the assertion $\max(\mathfrak{S}_\kappa) = \kappa$ (i.e., there are no $\kappa$-Kurepa trees) has the consistency strength of an inaccessible cardinal. The assertion $\min(\mathfrak{S}_\kappa) = \kappa$ (i.e., the tree property at $\kappa$ for $\kappa$ with uncountable cofinality) has the consistency strength of a weakly compact cardinal. We are interested in the consistency strength of the conjunction of these two statements, i.e., the non-existence of $\kappa$-Kurepa trees and the tree property. Since we are interested in inaccessible cardinals, we replace the first statement by the weaker assertion $\kappa^+ \notin \mathfrak{S}_\kappa$. As usual for these types of properties, where individually each one of them has a mild consistency strength, their combination is very strong. In Section \ref{section: applications} we will show that the statement $\kappa^{+}\notin\mathfrak{S}_\kappa$ for a weakly compact cardinal $\kappa$ implies the consistency of $\AD_{\bR} + \DC$.  

Trees with (somewhat) absolute sets of branches play a role in the derivation of consistency strength from certain configurations of the branch spectrum in the context of some covering lemma.
Indeed, if $\kappa$ is a regular cardinal and $L$ computes $\kappa^{+}$ correctly then there is a tree $T\subseteq 2^{<\kappa}$ with exactly $\kappa^{+}$ many branches, and moreover any model in which $\kappa$ has uncountable cofinality does not have any non-constructible cofinal branch through $T$. This result, which is a variant of a construction due to Solovay, generalizes to the Jensen-Steel core model $K$ below inner models with a Woodin cardinal, replacing absoluteness by set forcing absoluteness. 

\begin{definition}\label{def:strongly sealed}
Let $\kappa$ be a regular cardinal. A normal tree $T$ of height $\kappa$ is \emph{strongly sealed} if the set of branches of $T$ cannot be modified by set forcing that forces $\cf(\kappa) > \omega$.
\end{definition}
Strongly sealed trees with $\kappa$ many branches exist in $\ZFC$: take $T \subseteq 2^{<\kappa}$ to be the tree of all $x$ such that $\{\alpha \in \dom(x) \mid x(\alpha) = 1\}$ is finite. Thus, our main interest is in strongly sealed $\kappa$-trees with at least $\kappa^+$ many branches. Our constructions are very inner model theoretical, and thus can only produce $\kappa$-trees with $\kappa^{+}$ many branches, where $\kappa^{+}$ is computed correctly in some inner model. 
\begin{question}\label{question:stronglysealedkappa++tree}
Is it consistent that there is a strongly sealed $\kappa$-tree with $\kappa^{++}$ many branches?
\end{question}

Note that strongly sealed $\kappa$-trees do not exist if there is a Woodin cardinal $\delta > \kappa$, since Woodin's stationary tower forcing with critical point $\kappa^+$ will introduce new branches through any $\kappa$-tree $T$, while preserving the regularity of $\kappa$, as well as many large cardinal properties of $\kappa$. Thus, in order to get a meaningful notion of sealed trees in the presence of large cardinals we will use a weaker notion. The weakest notion of sealed tree is arguably having no perfect subtree (a perfect subtree is a continuous copy of $2^{<\kappa}$).
\begin{lemma}[Folklore]\label{lem:folkloreequiv}
Let $\kappa$ be a cardinal. The following are equivalent for a tree $T$ of height $\kappa$:
\begin{enumerate}
\item $T$ has a perfect subtree. 
\item Every set forcing that adds a fresh subset to $\kappa$ also adds a branch through $T$. 
\item There is a $\kappa$-closed forcing that adds a branch through $T$.
\end{enumerate}
\end{lemma}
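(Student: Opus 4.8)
The plan is to establish the equivalences by proving the cycle $(1)\Rightarrow(2)\Rightarrow(3)\Rightarrow(1)$. The implication $(2)\Rightarrow(3)$ is immediate: I would take $\bP=\Add(\kappa,1)$, the forcing whose conditions are partial functions $p\colon\kappa\rightharpoonup 2$ with $|\dom(p)|<\kappa$ ordered by reverse inclusion. This $\bP$ is $\kappa$-closed (every decreasing sequence of conditions of length ${<}\,\kappa$ has a lower bound, using that $\kappa$ is regular, which is the relevant case), and a routine density argument shows that its generic is a fresh subset of $\kappa$; so by $(2)$ it adds a branch through $T$, which is what $(3)$ asserts.

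For $(1)\Rightarrow(2)$, I would present the perfect subtree as a continuous embedding $e\colon 2^{<\kappa}\to T$ that is order-preserving (sends $\subsetneq$ to $<_T$), sends incomparable sequences to incomparable nodes, and has cofinal range, so that $\sup_{\alpha<\kappa}\operatorname{level}(e(f\restriction\alpha))=\kappa$ for every $f\in 2^\kappa$. Given a set forcing $\bP$ adding a fresh $A\subseteq\kappa$ with generic $G$, put $f_A=\chi_A\in 2^\kappa$. Then $b=\{u\in T : u\le_T e(f_A\restriction\alpha)\text{ for some }\alpha<\kappa\}$ is a cofinal branch of $T$ in $V[G]$, and it is new: if $b\in V$, one recovers $f_A$, and hence $A$, in $V$ by the recursion ``$f_A(\alpha)$ is the unique $i\in\{0,1\}$ with $e((f_A\restriction\alpha)^\frown i)\in b$'' --- the other successor of $e(f_A\restriction\alpha)$ is incomparable with the node of $b$ at that level, so it cannot lie on the chain $b$ --- contradicting $A\notin V$.

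The substance of the lemma is $(3)\Rightarrow(1)$. Fix a $\kappa$-closed forcing $\bP$ and a $\bP$-name $\dot b$ forced to be a new cofinal branch of $T$. I would first prove a \emph{splitting lemma}: whenever $q\Vdash\check t\in\dot b$ and $\ell<\kappa$, there are $q_0,q_1\le q$ and incomparable $t_0,t_1\in T$ with $t<_T t_i$, $\operatorname{level}(t_i)\ge\ell$, and $q_i\Vdash\check t_i\in\dot b$. For if below some $q$ no two conditions force incomparable nodes into $\dot b$, then $c=\{u\in T : \exists q'\le q\;(q'\Vdash\check u\in\dot b)\}$ is a chain meeting every level exactly once, hence a cofinal branch of $T$ that lies in $V$ and satisfies $q\Vdash\dot b=\check c$, contradicting that $\dot b$ is new; and once splitting is available, a further extension replaces $t_0,t_1$ by incomparable nodes of level ${\ge}\,\ell$ lying above $t$, using that $\dot b$ is forced cofinal. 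Then I would recursively build a tree of conditions $\langle q_s : s\in 2^{<\kappa}\rangle$, decreasing along $\subseteq$, and nodes $\langle t_s : s\in 2^{<\kappa}\rangle$ with $q_s\Vdash\check t_s\in\dot b$, $t_s<_T t_{s'}$ for $s\subsetneq s'$, and $t_{s^\frown 0}$ incomparable with $t_{s^\frown 1}$. At successor steps apply the splitting lemma to $(q_s,t_s)$ with $\ell>\operatorname{level}(t_s)$. At a limit $\lambda$, use $\kappa$-closure to fix a lower bound $q^*$ of $\langle q_{s\restriction\beta}:\beta<\lambda\rangle$, observe that $\eta:=\sup_{\beta<\lambda}\operatorname{level}(t_{s\restriction\beta})<\kappa$ by regularity of $\kappa$, and take $q_s\le q^*$ deciding the node $t_s$ of $\dot b$ at level $\eta$; since $\operatorname{level}(t_s)=\eta$ and $t_{s\restriction\beta}<_T t_s$ for all $\beta<\lambda$, normality of $T$ (a node at a limit level is determined by its set of $<_T$-predecessors) guarantees $t_s=\sup_T\{t_{s\restriction\beta}:\beta<\lambda\}$, so $s\mapsto t_s$ is continuous at $\lambda$. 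The resulting $e\colon s\mapsto t_s$ is a continuous splitting embedding of $2^{<\kappa}$ into $T$ with cofinal range, i.e., a perfect subtree.

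I expect the main obstacle to be $(3)\Rightarrow(1)$, and inside it two delicate points: stating and proving the splitting lemma correctly --- this is the sole place where the hypothesis that $\dot b$ is new is used, via the ``$\dot b$ would already lie in $V$'' argument --- and running the limit stages of the recursion, where $\kappa$-closure of $\bP$ (to obtain lower bounds of the condition chains), regularity of $\kappa$ (to keep the levels $\eta$ below $\kappa$), and normality of $T$ (to make the embedding continuous) all have to be combined. The other two implications are routine.
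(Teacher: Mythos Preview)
Your proposal is correct and, for the substantive implication $(3)\Rightarrow(1)$, follows the same line as the paper's Lemma~\ref{lemma: embedding cantor trees}: build a tree of conditions $\langle p_\eta,t_\eta\rangle_{\eta\in 2^{<\kappa}}$ by induction, using newness of $\dot b$ to split at successors and $\kappa$-closure at limits. Your version is slightly more careful---you isolate the splitting argument as a separate lemma and explicitly verify continuity of the embedding at limit levels via normality of $T$---and you also supply the easy directions $(1)\Rightarrow(2)\Rightarrow(3)$, which the paper leaves to the reader.
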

See Lemma \ref{lemma: embedding cantor trees} for the argument for $(3)\implies (1)$.

\begin{definition}\label{def:psp}
Let $\kappa$ be an uncountable cardinal. The \emph{Perfect Subtree Property} (PSP) for $\kappa$ is the statement that every $\kappa$-tree with at least $\kappa^+$ many branches has a perfect subtree.
\end{definition}

The Perfect Subtree Property can easily be violated by small forcing by Hamkins' Gap Forcing argument, \cite{Ha01}, see also Proposition \ref{prop:sigmaclosedcards}. In addition, as we will see in Section \ref{section: applications}, in the presence of some covering lemma there is a natural counterexample to the PSP in an inner model, providing a lower bound for the consistency of the PSP as well as a natural intermediate notion of \emph{sealed tree} which is compatible with the existence of Woodin cardinals. In particular, Theorem \ref{theorem: psp and AD_R} shows that the existence of a sealed $\kappa$-tree is consistent with the existence of Woodin cardinals above $\kappa$.

\begin{definition}\label{def:sealed}
  Let $\kappa$ be a regular cardinal. A normal tree $T$ of height $\kappa$ is \emph{sealed} if the set of branches through $T$ cannot be modified by set forcing $\bP$ satisfying the following properties:
  \begin{enumerate}
  \item $\bP \times \bP$ does not collapse $\kappa$,
  \item $\bP \times \bP$ preserves $\cf(\kappa) > \omega$, and
  \item $\bP$ does not add any new sets of reals.
  \end{enumerate}
\end{definition}

Note that Woodin's stationary tower forcing with arbitrary critical point does not satisfy these properties. The class of forcings $\bP$ is designed to preserve inner model theoretic properties such as iterability of mice and some form of condensation. This class of forcings contains $\kappa$-closed forcings such as $\Add(\kappa,1)$ and $\Col(\kappa, \lambda)$. In particular, if a tree $T$ is sealed then it has no perfect subtree. This notion of being sealed leads to a variant of Question \ref{question:stronglysealedkappa++tree} that is also open.

The structure of the paper is as follows. In Section \ref{section: upper bounds} we will give a few forcing constructions providing an upper bound for the consistency of PSP at a weakly compact cardinal as well as $\kappa^+ \notin \mathfrak{S}_\kappa$. These forcing constructions are mostly folklore. 
In Section \ref{section: abstract construction} we will revisit Solovay's theorem on the existence of a Kurepa tree in $L$ from \cite[Section 4]{Je71}, which is the main ingredient in his proof for the consistency strength of the Kurepa Hypothesis. 

As noted by Po{\'o}r and Shelah \cite{PoSh20}, a variant of Solovay's construction from \cite[Section 4]{Je71} provides a strongly sealed Kurepa tree. We conclude that if $0^\#$ does not exist then every weakly compact cardinal carries a strongly sealed tree with $\kappa^+$ many branches. The argument extends to the Jensen-Steel core model $K$ below proper class inner models with a Woodin cardinal:
\begin{theorem}\label{theorem: sealed trees in K}
Assume that there is no proper class inner model with a Woodin cardinal. 
Then for every regular cardinal $\kappa \geq \aleph_2$, there is a strongly sealed $\kappa$-tree with exactly $\left(\kappa^{+}\right)^K$ many branches. In particular, if $\kappa$ is weakly compact, then there is a strongly sealed $\kappa$-tree with $(\kappa^{+})^V$ many branches.
\end{theorem}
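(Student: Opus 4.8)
The plan is to adapt Solovay's construction of a Kurepa tree in $L$ to the core model $K$, using the fact that $K$ below a Woodin cardinal is set forcing absolute and satisfies enough condensation. First I would recall from Section~\ref{section: abstract construction} the abstract version of Solovay's argument: given a fine structural inner model $M$ that computes $\kappa^+$ correctly and has a $\Sigma_1$-definable well-ordering together with condensation, one builds a tree $T \subseteq 2^{<\kappa}$ whose level-$\alpha$ nodes code the initial segments (below $\alpha$) of the $M$-constructed subsets of $\kappa$, arranged so that the cofinal branches of $T$ are exactly the characteristic functions of the subsets of $\kappa$ lying in $M$; since $M \models \kappa^+ = (\kappa^+)^M$, there are exactly $(\kappa^+)^M$ of these. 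The key point is that the branches are determined by $M$, so in any outer model $W$ with $\cf(\kappa)^W > \omega$, a cofinal branch $b \in W$ through $T$ would give a subset of $\kappa$ with all its proper initial segments in $M$; one then argues (this is the sealing step) that such a set must already lie in $M$, hence $b$ is not new.

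The main work is therefore to verify that $K = K^{DJ}$ (the Jensen--Steel core model below a Woodin cardinal), under the hypothesis that there is no proper class inner model with a Woodin cardinal, has the structural features the abstract construction needs: a $\Sigma_1$ (in fact good $\Sigma_1$) well-ordering of its reals and more generally a definable global well-ordering, acceptability/condensation for its levels, and crucially that it is \emph{generically absolute}, i.e., $K^V = K^{V[g]}$ for set-generic $g$, and that $(\kappa^+)^K$ is correctly computed in the relevant sense. These are standard facts about $K$ below a Woodin cardinal (due to Jensen--Steel, building on Steel's core model theory), so I would cite them and check that the sealing argument goes through with ``constructible'' replaced by ``in $K$'': if $\bP$ is a set forcing forcing $\cf(\kappa) > \omega$ and $b \in V[g]$ is a cofinal branch through $T$, then in $V[g]$ we have $K^{V[g]} = K^V$, the tree $T$ still codes exactly the subsets of $\kappa$ in $K$, and a new branch would yield a subset of $\kappa$ all of whose initial segments lie in $K$ but which itself is not in $K$ --- and one rules this out using that $\kappa$ is inaccessible in $K$ together with condensation (the ``$S$-hull'' / $\Sigma_1$-definability argument from Solovay's original proof, relativized to $K$).

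For the ``in particular'' clause, once $T$ is a strongly sealed $\kappa$-tree with $(\kappa^+)^K$ many branches and $\kappa$ is weakly compact in $V$, I would invoke the standard fact that a weakly compact cardinal is (in particular) inaccessible and that weak compactness is enough to guarantee $(\kappa^+)^K = (\kappa^+)^V$: this follows from weak covering for $K$ at $\kappa$ (the Jensen--Steel weak covering lemma gives $\cf((\kappa^+)^K) \geq \kappa$, hence $(\kappa^+)^K = (\kappa^+)^V$ since $(\kappa^+)^K \leq (\kappa^+)^V$ always and a cardinal of $V$ of cofinality $\geq \kappa$ between $\kappa$ and $(\kappa^+)^V$ must equal $(\kappa^+)^V$). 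Actually, for weakly compact $\kappa$ one even has weak covering in the sharper form, but weak covering for $K$ below a Woodin cardinal at a measurable or weakly compact cardinal suffices.

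The hard part will be packaging the abstract Solovay construction so that it applies uniformly to $K$: one must be careful that the tree $T$ is genuinely a normal $\kappa$-tree (levels of size $<\kappa$, which uses $\kappa$ inaccessible in $K$ and $2^{<\kappa} = \kappa$ there) and that the branch-counting and the ``no new branches'' argument only use set-generic absoluteness of $K$ plus condensation, never full $V = K$. I expect the bulk of the proof to consist of (i) citing the relevant properties of $K$ below a Woodin cardinal, and (ii) checking that Solovay's ``every branch is coded by a subset of $\kappa$ in the model, and every such subset is in the model'' dichotomy survives the weakening from $L$ (absolute) to $K$ (set-forcing absolute), which is exactly why the resulting tree is \emph{strongly} sealed rather than merely sealed.
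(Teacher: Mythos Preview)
Your description of the tree and its sealing argument has a genuine gap. You say the level-$\alpha$ nodes code initial segments $x \restriction \alpha$ of subsets $x \in K \cap 2^\kappa$, and that the sealing step is ``all proper initial segments lie in $K$, hence the set lies in $K$''. That implication is simply false: if $\kappa$ is inaccessible in $K$ (which you yourself assume) then every $s \in 2^{<\kappa}$ is in $K$ and extends, by padding with zeros, to an element of $K \cap 2^\kappa$, so your tree is the full binary tree $2^{<\kappa}$ and any forcing adding a fresh subset of $\kappa$ adds a branch. Solovay's actual construction, and the paper's tree $\mathbb{T}(K\|(\kappa^+)^K)$, have nodes that are pairs $\langle N, \bar{x}\rangle$ where $N$ is the transitive collapse of a hull $\Hull^{K\|\gamma}(\rho \cup \{x\})$; a cofinal branch yields not merely a subset $x_b \subseteq \kappa$ but a direct-limit \emph{premouse} $R_b$ with $x_b \in R_b$. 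The sealing step is to show $R_b \lhd K$, whence $x_b \in K$ and the branch is recovered inside $K$.

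In the paper this last step does not use bare condensation: one shows (Lemma~\ref{lem:directlimitofbranch}) that $R_b$ is sound, agrees with $K$ below $\kappa$, has $\rho_\omega(R_b)=\kappa$, and is \emph{countably iterable}, and then invokes Schindler's stacking characterization of $K\|\mu$ for $\mu\geq\aleph_2$ to conclude $R_b \lhd K$. Countable iterability of $R_b$ --- nowhere mentioned in your proposal --- is the crux, and it is obtained by embedding any countable elementary substructure of $R_b$ back into an active node $M_\delta$ on the branch, which is possible precisely because $\cf(\kappa)>\omega$ in the extension. Without carrying the models along the branch there is no mechanism to certify that the limit subset lies in $K$; this, not ``condensation plus $\kappa$ inaccessible in $K$'', is what replaces the $V=L$ step when passing from Solovay's argument to $K$. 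Your handling of the weakly compact clause via weak covering is correct and matches the paper.
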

The abstract construction of such a tree is done in Section \ref{section: abstract construction}. Note that with a more elaborate fine structural argument our construction is likely to also yield a strongly sealed $\kappa$-tree with exactly $\left(\kappa^{+}\right)^K$ many branches in the case $\kappa = \omega_1$ which is not covered by our proof of Theorem \ref{theorem: sealed trees in K} but this is beyond the scope of this paper.
In Section \ref{section: applications} we use the abstract construction to prove Theorem \ref{theorem: sealed trees in K} as well as the following result:
\begin{theorem}\label{theorem: psp and AD_R}
Let $\kappa$ be a weakly compact cardinal and assume that there is no non-domestic premouse\footnote{See Definition \ref{def:domesticmouse} below.}. Then there is a sealed $\kappa$-tree with exactly $\kappa^+$ many branches.
\end{theorem}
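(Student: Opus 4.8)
The plan is to apply the abstract tree construction of Section~\ref{section: abstract construction} — the same machinery used for Theorem~\ref{theorem: sealed trees in K} — over a core model $K$ adapted to the present hypothesis, and then to verify that the forcings isolated in Definition~\ref{def:sealed} preserve exactly those features of $K$ on which the construction and its branch analysis depend. Thus the argument splits into an inner-model-theoretic part (locating the right $K$ and establishing its robustness under this class of forcings) and a largely formal part (running the construction and reading off the conclusion).

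First I would invoke the core model machinery available under the assumption that there is no non-domestic premouse to obtain an inner model $K$ that is fully iterable, satisfies enough of $\GCH$ together with the condensation properties demanded in Section~\ref{section: abstract construction}, and for which weak covering holds; in particular, since $\kappa$ is weakly compact, $(\kappa^{+})^{K}=\kappa^{+}$, exactly as in the ``in particular'' clause of Theorem~\ref{theorem: sealed trees in K}. In contrast with Theorem~\ref{theorem: sealed trees in K}, here $V$ may contain Woodin cardinals, so $K$ is not the Jensen--Steel core model below a Woodin but its analogue living below the least non-domestic premouse; producing $K$ and verifying these properties is already a substantial piece of inner model theory. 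Feeding $K$ and $\kappa$ into the construction of Section~\ref{section: abstract construction} then yields a normal $\kappa$-tree $T$ with exactly $(\kappa^{+})^{K}=\kappa^{+}$ many cofinal branches, together with the accompanying analysis of those branches: a cofinal branch of $T$ occurring in any outer model in which $\kappa$ is not collapsed and retains uncountable cofinality can be decoded, by a L\"owenheim--Skolem hull argument together with condensation over $K$, into an element of $K$, hence into one of the $(\kappa^{+})^{K}$ branches enumerated by the construction.

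It then remains to see that $T$ is sealed, so let $\bP$ be as in Definition~\ref{def:sealed} and let $G$ be $V$-generic for $\bP$; since $\bP$ cannot remove branches, it suffices to show it adds no new cofinal branch through $T$. The three clauses of Definition~\ref{def:sealed} are tailored to make this work. As $\bP$ adds no new sets of reals, the inner model theory underlying $K$ — iterability of the relevant premice and condensation — is undisturbed, so $K$, and with it $T$ and the decoding of its branches, is computed the same way in $V$ and in $V[G]$ (this is the clause that rules out Woodin's stationary tower, which does add new sets of reals). Since moreover $\bP\times\bP$ neither collapses $\kappa$ nor destroys $\cf(\kappa)>\omega$ — the conditions under which, in their product form, the branch-decoding argument operates — that argument applies inside $V[G]$ and forces every cofinal branch of $T$ there into $K^{V[G]}=K^{V}$; as those branches are already in $V$, we obtain $[T]^{V[G]}=[T]^{V}$, i.e.\ $T$ is sealed. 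Since sealed trees have no perfect subtree, $T$ also witnesses that the Perfect Subtree Property of Definition~\ref{def:psp} fails at $\kappa$ and that $\kappa^{+}\in\mathfrak{S}_\kappa$.

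The main obstacle is the inner model theory of the first step: isolating the core model $K$ below a non-domestic premouse and establishing its iterability, condensation, and weak covering at the weakly compact cardinal $\kappa$, and — most delicately — showing that the forcings of Definition~\ref{def:sealed} leave $K$, hence the branch set of $T$, intact even in the presence of Woodin cardinals. This last point is also exactly why the conclusion must be ``sealed'' rather than ``strongly sealed'': with a Woodin above $\kappa$ the stationary tower adds a branch through every $\kappa$-tree while preserving $\cf(\kappa)>\omega$, so no $\kappa$-tree is strongly sealed — but the stationary tower adds new sets of reals and so lies outside the class of Definition~\ref{def:sealed}, which is what leaves room for $T$ to be sealed.
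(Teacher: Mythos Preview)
Your overall architecture is right—apply the abstract construction of Section~\ref{section: abstract construction} to a suitable inner-model-theoretic object, then argue that the forcings of Definition~\ref{def:sealed} leave the branch analysis intact—but the specific object you invoke is the gap. You posit a global core model ``$K$ below a non-domestic premouse'' with weak covering and with $K^{V[G]}=K^{V}$ for the forcings in question, and you justify the latter by saying that since $\bP$ adds no new sets of reals, the iterability and condensation underlying $K$ are undisturbed. No such global $K$ with these generic absoluteness properties is available in the literature at this level; indeed the paper's closing remark about Larson--Sargsyan flags that even the convergence of the $K^c$ construction used here is delicate. The Jensen--Steel $K$ that drives Theorem~\ref{theorem: sealed trees in K} lives strictly below one Woodin, and its analogue above Woodin cardinals is not something you can simply invoke.

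The paper sidesteps this by working not with a global $K$ but with Jensen's \emph{stack} $\mathcal{S}=\mathcal{S}(K^c\|\kappa)$: the union of all sound countably iterable premice extending $K^c\|\kappa$ that project to $\kappa$. This is a local object of height $(\kappa^+)^V$ (by the covering result \cite[Lemma~5.1]{JSSS09}), and the tree is $\mathbb{T}(\mathcal{S})$. The branch analysis of Lemmas~\ref{lem:directlimitofbranch} and~\ref{lem:branchfromx} shows that any branch in any outer model with $\cf(\kappa)>\omega$ yields a sound countably iterable $R_b$ projecting to $\kappa$ with $R_b\|\kappa=K^c\|\kappa$, so $R_b\unlhd\mathcal{S}$ \emph{by definition of the stack}—no global comparison or absoluteness of a full $K$ is needed. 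The sealing step then reduces to the absoluteness of the stack itself, $\mathcal{S}^{V[G]}((K^c\|\kappa)^V)=\mathcal{S}^V$, which is exactly what \cite[Corollary~3.4]{NeSt16} provides for forcings satisfying the three clauses of Definition~\ref{def:sealed}. In short: replace your hypothetical global $K$ by the stack $\mathcal{S}(K^c\|\kappa)$ and your hand-waved absoluteness by the cited Neeman--Steel result, and the argument goes through.
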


By Lemma \ref{lem:folkloreequiv}, this yields the following corollary.

\begin{corollary}
  Let $\kappa$ be a weakly compact cardinal and suppose the Perfect Subtree Property holds at $\kappa$. Then there is a non-domestic premouse. In particular, there is an inner model of $\ZF + \DC + \AD_{\bR}$. 
\end{corollary}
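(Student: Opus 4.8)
The plan is to argue by contraposition from Theorem \ref{theorem: psp and AD_R}. I would fix a weakly compact cardinal $\kappa$, assume the Perfect Subtree Property holds at $\kappa$, and suppose towards a contradiction that there is no non-domestic premouse. Under this last hypothesis Theorem \ref{theorem: psp and AD_R} produces a sealed $\kappa$-tree $T$ with exactly $\kappa^+$ many branches.

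The next step is to observe that a sealed tree cannot have a perfect subtree. Let $\bP$ be any $\kappa$-closed forcing. Then $\bP \times \bP$ is again $\kappa$-closed, so it neither collapses $\kappa$ nor changes $\cf(\kappa)$; moreover $\bP$ itself adds no new bounded subsets of $\kappa$, and since $\kappa$ is inaccessible $\bR$ has size less than $\kappa$, so $\bP$ adds no new subsets of $\bR$ and in particular no new sets of reals. Hence $\bP$ falls under the class of forcings that, by sealedness of $T$, cannot modify the set of branches of $T$, so $\bP$ adds no branch through $T$. By the contrapositive of the implication $(3) \Rightarrow (1)$ in Lemma \ref{lem:folkloreequiv}, $T$ has no perfect subtree. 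But $T$ is a $\kappa$-tree with at least $\kappa^+$ many branches, which contradicts the Perfect Subtree Property at $\kappa$. Therefore there is a non-domestic premouse, which proves the first assertion. For the ``in particular'' clause I would quote the standard inner model theoretic fact: from a (sufficiently iterable) non-domestic premouse one obtains, via the analysis of $\HOD$ and the derived model theorem in the core model induction, an inner model of $\ZF + \DC + \AD_{\bR}$.

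I do not anticipate any essential difficulty: the corollary is just a repackaging of Theorem \ref{theorem: psp and AD_R} with Lemma \ref{lem:folkloreequiv} and a cited result. The only point deserving a sentence of care is confirming that the forcing class in Definition \ref{def:sealed} contains all $\kappa$-closed forcings --- so that ``sealed'' really does imply ``no perfect subtree'' --- but this follows immediately from the facts that a finite product of $\kappa$-closed forcings is $\kappa$-closed and that a $\kappa$-closed forcing adds no new subsets of sets of size $<\kappa$ (here using that $\kappa$ is inaccessible to handle sets of reals).
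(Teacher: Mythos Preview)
Your proposal is correct and matches the paper's approach exactly: the paper derives the corollary directly from Theorem~\ref{theorem: psp and AD_R} via Lemma~\ref{lem:folkloreequiv}, having already noted (just after Definition~\ref{def:sealed}) that $\kappa$-closed forcings satisfy the three conditions for sealedness and hence that a sealed tree has no perfect subtree. The ``in particular'' clause is likewise left to a citation (the existence of a non-domestic premouse yields an inner model of $\AD_{\bR} + \DC$ by results stemming from \cite{ANS01}), just as you indicate.
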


Our definitions are mostly standard. For facts about forcing and trees we refer the reader to \cite[Chapters 9, 14]{Je03}. For basic facts, definitions and notions related to inner model theory, we refer the reader to \cite{St10} with the exception that we are using \emph{Jensen indexing} as in \cite{Je}. This is for example used in \cite{JSSS09}. We assume throughout the paper that there is no mouse with a superstrong extender.

We would like to thank the anonymous referee for a long list of corrections and suggestions that greatly improved the article.

\section{Trees and upper bounds}\label{section: upper bounds}
We will use the following definition of Kurepa tree. 
\begin{definition}
Let $\kappa$ be a cardinal and let $T$ be a tree of height $\kappa$. Write $[T]$ for the set of all cofinal branches through $T$. Then $T$ is called \emph{$\kappa$-Kurepa} if $T$ is a $\kappa$-tree and $|[T]| \geq \kappa^{+}$. 
\end{definition}
Note that some authors require a Kurepa tree to have exactly $\kappa^{+}$ branches.

We say that a tree of height $\kappa$ is \emph{binary} if it is a subtree of $2^{<\kappa}$, the \emph{full binary tree}.  We say that a tree is \emph{normal} if every node splits, every node has an arbitrarily high node above it, and at every limit level, each nodes is uniquely determined from the branch below it. 
When $\kappa$ is strongly inaccessible, the full binary tree is a normal Kurepa tree. Thus, in those cases the notion of \emph{slim Kurepa tree} is more suitable but we will not pursue this direction here. We remark that if $T$ is a $\kappa$-tree then there is a normal tree $S \subseteq T$ (the pruned subtree of $T$) such that $|[S]| + \kappa \geq |[T]|$. In particular, focusing on trees with at most $\kappa$ many branches, $S$ and $T$ have the same number of branches. 

In \cite{Si71}, Silver showed that if $\mu < \kappa$ are cardinals where $\mu$ is a successor cardinal and $\kappa$ is inaccessible, then there are no $\mu$-Kurepa trees in the generic extension by the Levy collapse $\Col(\mu, {<}\kappa)$. We review the argument here, as it is one of the motivations for Definition \ref{def:psp}.

Let $\dot{T}$ be a name for a $\mu$-Kurepa tree in the generic extension. Using the chain condition of the Levy collapse, $\dot{T}$ can be represented as a name with respect to some initial segment of the collapse, $\Col(\mu, {<}\bar\kappa)$, for $\bar\kappa < \kappa$. In the intermediate model, $V^{\Col(\mu, {<}\bar\kappa)}$, $2^\mu \leq (2^{\bar{\kappa}})^{V} < \kappa$, and thus $\dot{T}$ has at most $2^{\bar{\kappa}}$ many branches. Since this cardinal is collapsed in the full generic extension, in order for $\dot{T}$ to have more than $\mu$ many branches, the quotient forcing $\Col(\mu, [\bar{\kappa}, \kappa) )$ must introduce them. The following lemma is the crucial step in the proof:
\begin{lemma}\label{lemma: embedding cantor trees}
Let $T$ be a tree of height $\mu$ and let $\mathbb{P}$ be a $\mu$-closed forcing. If $\mathbb{P}$ introduces a new branch through $T$ then there is an order preserving injection from the full binary tree $2^{<\mu}$ to $T$.
\end{lemma}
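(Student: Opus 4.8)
The plan is to run a standard splitting-tree argument. First I would fix a $\mathbb{P}$-name $\dot b$ for a new branch through $T$ and, passing below a condition, a condition $p_0$ forcing that $\dot b$ is a cofinal branch through $T$ with $\dot b \notin V$. The goal is to recursively build, for every $s \in 2^{<\mu}$, a condition $p_s \in \mathbb{P}$ and a node $t_s \in T$ so that $p_s \Vdash \check t_s \in \dot b$, the $p_s$ are decreasing along $\subseteq$, the $t_s$ are $\le_T$-increasing along $\subseteq$ and strictly $<_T$-increasing along $\subsetneq$, and $s \perp s'$ forces $t_s \perp_T t_{s'}$. The map $s \mapsto t_s$ is then the desired order-preserving injection of $2^{<\mu}$ into $T$ (indeed a tree embedding, whose downward closure in $T$ is a perfect subtree). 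Throughout I would assume $\mu$ is regular, which is the case in the intended application where $\mu$ is a successor cardinal.

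The key lemma, and the step I expect to be the main point, is a splitting claim: if $q \le p_0$ and $q \Vdash \check t \in \dot b$, then there are $q_0, q_1 \le q$ and $u_0 \perp_T u_1$ in $T$ with $q_i \Vdash \check u_i \in \dot b$. I would prove this by contradiction: if it fails below some $q$, then for each level $\alpha < \mu$ there is a unique node $b_\alpha$ of level $\alpha$ that is forced onto $\dot b$ by some condition $\le q$ — existence because $q$ forces $\dot b$ cofinal (so densely below $q$ a node of $\dot b$ of level $\ge \alpha$, hence, passing to its level-$\alpha$ predecessor, one of level exactly $\alpha$, gets decided), and uniqueness because two such nodes of equal level would have to be $\le_T$-comparable by the failure of splitting, hence equal. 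These $b_\alpha$ cohere into a cofinal branch $b$ of $T$ lying in $V$, and a further density argument shows $q \Vdash \check b \subseteq \dot b$, whence $q \Vdash \dot b = \check b \in V$, contradicting the choice of $p_0$. This is exactly the place where the hypothesis that $\mathbb{P}$ adds a \emph{new} branch is used.

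With the splitting claim in hand, the recursion is routine. I would take $t_\emptyset$ to be the root of $T$. At successor steps, applying the claim below $p_s$ gives $q_0, q_1 \le p_s$ and $u_0 \perp_T u_1$ with $q_i \Vdash \check u_i \in \dot b$; since $q_i$ also forces $\check t_s \in \dot b$ and $\dot b$ is a chain, each $u_i$ is $\le_T$-comparable with $t_s$, and a short case split using $u_0 \perp_T u_1$ forces $t_s <_T u_i$ for both $i$, so I set $p_{s^\frown i} = q_i$ and $t_{s^\frown i} = u_i$. At a limit step $s$ of length $\lambda < \mu$, $\mu$-closure yields a lower bound $p$ of $\langle p_{s\restriction\xi} : \xi < \lambda\rangle$; since $p$ forces all $\check t_{s\restriction\xi}$ onto $\dot b$ and, by regularity of $\mu$, the levels of the $t_{s\restriction\xi}$ are bounded below $\mu$, $p$ forces $\dot b$ to contain a node above all of them, so I strengthen $p$ to some $p_s$ deciding such a node $t_s$. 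Order-preservation is then built into the construction, and injectivity follows because for incomparable $s, s'$ with meet $r$ one has $t_{r^\frown 0} \perp_T t_{r^\frown 1}$ and hence $t_s \perp_T t_{s'}$. The only real obstacle is the splitting claim; everything else is bookkeeping, with $\mu$-closure doing exactly the work needed to pass through limit levels.
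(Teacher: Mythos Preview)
Your argument is correct and is essentially the same splitting-tree construction as the paper's: build $(p_s,t_s)_{s\in 2^{<\mu}}$ with $p_s \Vdash \check t_s \in \dot b$, use $\mu$-closure at limits, and split at successors because $\dot b$ is new. The only difference is cosmetic: the paper states the successor step in one line (``use that $\dot b$ is new to find two extensions forcing incompatible values at some level $\xi_\eta$''), while you spell out the splitting claim as a separate sublemma with a proof by contradiction; this is a standard elaboration, not a different route.
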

\begin{proof}
Let $\dot{b}$ be a name for the new branch. Let us construct by induction for every $\eta \in 2^{<\mu}$ a pair $(p_\eta, t_\eta)$ where $p_\eta \in \mathbb{P}$ is a condition, $t_\eta \in T$ and $p_\eta \Vdash \check{t}_\eta \in \dot{b}$. We will assume, by induction, that for every $\eta \trianglelefteq \eta'$, $p_{\eta'}$ is stronger than $p_\eta$, $t_{\eta'}$ is above $t_\eta$ in the tree and $t_{\eta \frown \langle 0\rangle}, t_{\eta \frown \langle 1\rangle}$ are on the same level $\xi_\eta$ and incompatible.

At limit steps, we use the closure of the forcing to define $p_\eta$ as a condition stronger than $p_{\eta \restriction \alpha}$, for all $\alpha < \len(\eta)$. Since $p_\eta$ is a condition in $\mathbb{P}$ and it forces $t_{\eta \restriction \alpha} \in \dot{b}$ for all $\alpha < \len(\eta)$, there is an element of $T$ above all $t_{\eta \restriction \alpha}$ and $p_\eta$ forces the $\leq_T$-least such element to be in $\dot{b}$. 

At successor steps, we use the assumption that $\dot{b}$ is new in order to find two different extensions of $p_\eta$ that force incompatible values $t_{\eta \frown \langle 0 \rangle}, t_{\eta \frown \langle 1 \rangle}$ for the element of the branch $\dot b$ at some level $\xi_\eta$ above $\len(t_\eta)$. 
\end{proof}

Notice that for a successor cardinal $\mu$, there is no such embedding of the full binary tree into a $\mu$-tree. Indeed, let $\rho$ be the least cardinal such that $2^\rho \geq \mu$ (necessarily, $\rho < \mu$ since $\mu$ is a successor cardinal). Let us consider the $\rho$-th level of the binary tree. By our assumption, it consists of $2^\rho \geq \mu$ many different elements. But since $2^{<\rho} < \mu$, the levels of the images of the elements of the binary tree below $\rho$ are bounded and by continuity so are their limits, which is a contradiction.  

The proof shows that for every regular $\mu$, in a generic extension by the forcing  $\Col(\mu, {<}\kappa)$, every $\mu$-tree which has $\mu^+$ many branches contains a perfect subtree. By essentially the same argument, one can show that if $\mu$ is strongly inaccessible then after forcing with $\Col(\mu, {<}\kappa) \times \Add(\mu, \kappa^+)$, every $\mu$-tree either has ${\leq}\mu$ many branches or a perfect subtree, and therefore $\mu^{++}$ many branches. So, in this model $\mathfrak{S}_\mu \subseteq \mu \cup \{\mu, 2^\mu\}$ and $2^\mu > \mu^+$. If we further assume that $\mu$ is weakly compact in the generic extension, then $\mathfrak{S}_\mu = \{\mu, 2^{\mu}\}$. 

The only known way to preserve weak compactness after the Levy collapse is to start with a supercompact cardinal and force with some preparation forcing. There is some evidence that this is necessary, see \cite{ApHa01, NeSt16}.  
Altogether, we showed the following:
\begin{prop}
Let $\mu$ be ${<}\kappa$-supercompact, where $\kappa$ is strongly inaccessible. Then there is a forcing extension in which $\mu$ is weakly compact, $\mathfrak{S}_\mu = \{\mu, \mu^{++}\}$ and the Perfect Subtree Property holds at $\mu$.  
\end{prop}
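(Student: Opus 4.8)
The plan is to combine the observations in the paragraph before the proposition with an indestructibility argument in the style of Apter and Hamkins \cite{ApHa01}. By that discussion it suffices to produce a forcing extension $V[G]$ in which $\mu$ is ${<}\kappa$-supercompact and, crucially, the weak compactness of $\mu$ is indestructible under $\mu$-directed-closed forcing of hereditary size $<\kappa$. Granting this, I force over $V[G]$ with $\bR:=\Col(\mu,{<}\kappa)\times\Add(\mu,\kappa^{+})$. Exactly as for the L\'evy collapse, the $\kappa$-cc of $\Col(\mu,{<}\kappa)$ together with the fact that conditions of $\Add(\mu,\kappa^{+})$ have support of size $<\mu$ gives, by a $\Delta$-system argument, that $\bR$ is $\kappa$-cc; moreover $\bR$ is $\mu$-directed-closed. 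Hence in $V[G][\bR]$ one has $\kappa=\mu^{+}$, $\kappa^{+}=\mu^{++}$, and $2^{\mu}=\kappa^{+}=\mu^{++}$ (counting nice $\bR$-names, using the $\kappa$-cc and that $\kappa$ is inaccessible in $V[G]$), while $\mu$ remains inaccessible because $\bR$ adds no bounded subsets of $\mu$ and $\mu$ is a strong limit in $V[G]$.

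I next argue that $\mu$ retains the tree property in $V[G][\bR]$. Let $T$ be a $\mu$-tree in $V[G][\bR]$. By the $\kappa$-cc of $\bR$, a nice name for $T$ involves fewer than $\kappa$ conditions, and since $\kappa$ is inaccessible in $V[G]$ the $\Col$-coordinates appearing in it are bounded below $\kappa$ while the $\Add$-coordinates appearing form a set of size $<\kappa$; so there is a complete subforcing of $\bR$ isomorphic to a $\mu$-directed-closed poset $\bR_{0}\in V_{\kappa}^{V[G]}$ of hereditary size $<\kappa$ with $T$ in the corresponding intermediate extension $V[G][\bR_{0}]$. By the assumed indestructibility, $\mu$ is weakly compact in $V[G][\bR_{0}]$, so $T$ has a cofinal branch there, and hence in $V[G][\bR]$. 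Thus $\mu$ is weakly compact in $V[G][\bR]$, and now the discussion preceding the proposition (see the remarks following Lemma \ref{lemma: embedding cantor trees}) gives $\mathfrak{S}_{\mu}=\{\mu,2^{\mu}\}=\{\mu,\mu^{++}\}$ and the Perfect Subtree Property at $\mu$.

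It remains to produce $V[G]$. After a preliminary forcing to make $\GCH$ hold (which preserves the ${<}\kappa$-supercompactness of $\mu$), one runs the Laver-style preparation for the ${<}\kappa$-supercompactness of $\mu$: a reverse Easton iteration $\bP$ of length $\kappa$ with Easton support, guided by a Laver function $\ell\colon\kappa\to V_{\kappa}$ with the property that for every $\mu$-directed-closed poset $\mathbb{Q}$ of hereditary size $<\kappa$ and every $\lambda<\kappa$ there is an elementary embedding $j\colon V\to M$ with $\crit(j)=\mu$, $j(\mu)>\lambda$, ${}^{\lambda}M\subseteq M$ and $j(\ell)(\mu)$ a name for $\mathbb{Q}$; at stage $\alpha$ one forces with $\ell(\alpha)$ if it names an $\alpha$-directed-closed poset of hereditary size $<\alpha$, and trivially otherwise. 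Standard bookkeeping shows $\bP$ has size $\kappa$, is $\kappa$-cc, keeps $\kappa$ inaccessible, and --- by the usual lifting argument for Laver preparations --- keeps $\mu$ ${<}\kappa$-supercompact; put $V[G]=V^{\bP}$. The main obstacle is to verify that $V[G]$ has the required indestructibility. Given a $\mu$-directed-closed $\mathbb{Q}$ of hereditary size $<\kappa$ and a $\mathbb{Q}$-name $\dot T$ for a $\mu$-tree, one picks $\lambda<\kappa$ large and $j\colon V\to M$ as above with $j(\ell)(\mu)$ naming $\mathbb{Q}$; then $j(\bP)$ factors as $\bP\ast\dot{\mathbb{Q}}\ast\dot{\bP}_{\mathrm{tail}}$ with $\dot{\bP}_{\mathrm{tail}}$ forced sufficiently closed, one builds an $M$-generic for $\dot{\bP}_{\mathrm{tail}}$ lying below a master condition for the image of the $\mathbb{Q}$-generic (which exists because $\mathbb{Q}$ is $\mu$-directed-closed of hereditary size $<\kappa<j(\mu)$), and lifts $j$ through $\bP\ast\mathbb{Q}$; by elementarity a node of the image of $T$ then lies on level $\mu$, yielding a cofinal branch through $T$. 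The delicate point, as in all such preparations, is to choose $\lambda$ so that the closure of $\dot{\bP}_{\mathrm{tail}}$ outruns the number of dense subsets of $M$ that must be met, and this is exactly the combinatorics carried out in \cite{ApHa01}; the evidence quoted there and in \cite{NeSt16} indicates that the appeal to a (locally) supercompact cardinal is essentially unavoidable.
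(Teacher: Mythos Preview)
Your approach is exactly the one the paper sketches in the two paragraphs preceding the proposition (there is no separate proof in the paper): a Laver-style preparation followed by $\Col(\mu,{<}\kappa)\times\Add(\mu,\kappa^{+})$, and then the Silver-type absorption argument via Lemma~\ref{lemma: embedding cantor trees}. One technical slip to correct: the preparation should be the standard one of length $\mu$ (with Laver function $\ell\colon\mu\to V_{\mu}$), not of length $\kappa$; as you wrote it, $j(\bP)$ and $\bP$ agree only below $\crit(j)=\mu$, so your factorization $j(\bP)=\bP\ast\dot{\mathbb{Q}}\ast\dot{\bP}_{\mathrm{tail}}$ is not available, whereas with the usual length-$\mu$ iteration it is and your lifting argument goes through.
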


The next proposition shows that the Perfect Subtree Property does not follow from $\mu^{+}\notin \mathfrak{S}_\mu$. 

We say that a cardinal $\mu$ is \emph{$\sigma$-closed} if $\rho^{\aleph_0}< \mu$ for every $\rho < \mu$.

\begin{prop}\label{prop:sigmaclosedcards}
Let $\mu$ be a regular cardinal. Adding a single Cohen real forces that the Perfect Subtree Property fails at $\mu$. If we further assume that $\mu$ is $\sigma$-closed and for all $\lambda \in \mathfrak{S}_\mu \setminus \mu$, $\lambda^{\aleph_0} = \lambda$ then $\mathfrak{S}_\mu^V = \mathfrak{S}_\mu^{V[G]}$, where $G$ is generic for adding a single Cohen real. 
\end{prop}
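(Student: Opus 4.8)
\textbf{Proof proposal for Proposition \ref{prop:sigmaclosedcards}.}

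The plan is to prove the two assertions separately. For the first assertion, I would exhibit an explicit $\mu$-tree in $V[c]$, where $c$ is the added Cohen real, that has exactly $\mu^+$ many branches but no perfect subtree. The idea is that adding a Cohen real is ``almost'' $\mu$-closed in the sense relevant to Lemma \ref{lemma: embedding cantor trees}, but it adds a new branch through a natural tree. Concretely, for each $\alpha < \mu^+$ fix in $V$ a cofinal branch parametrized by an injection $e_\alpha \colon \alpha \to \mu$ (or use a $\Box$-like sequence, but in fact the full binary tree is not needed: one works with the tree $T$ whose $\alpha$-th level consists of sequences coding initial segments of the $e_\alpha$'s). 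The new branch added by the Cohen real: Cohen forcing adds a ``fresh'' subset of $\omega$, hence by absorbing it we get a new branch through a tree whose first $\omega$ levels already carry all the information; concretely the tree of attempts to build such a branch. Then appeal to Lemma \ref{lem:folkloreequiv}: since Cohen forcing is \emph{not} $\mu$-closed (as $\mu$ is uncountable and regular) but adds a fresh subset of $\mu$ only through its countable part, and since no $\mu$-closed forcing reads off the Cohen real, the tree cannot have a perfect subtree — otherwise by (1)$\Rightarrow$(3) a $\mu$-closed forcing would add a branch, and we can arrange that the only branches of $T$ in any $\mu$-closed extension are the ground model ones together with the Cohen-generic branch, which the $\mu$-closed forcing does not compute. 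The cleanest route is actually the Gap Forcing argument of Hamkins \cite{Ha01}: the Cohen real is added by a forcing of size $<\mu$, so any $\mu$-closed forcing in $V[c]$ that adds a branch through $T$ would, by the gap forcing theorem, have that branch already definable over $V$, contradicting freshness of the Cohen-generic branch. I expect setting up this tree $T$ so that it provably has \emph{exactly} $\mu^+$ branches (not more) to be the fiddly part — one needs $2^{<\mu} \le \mu^+$ or a careful bookkeeping, so I would build $T$ as a subtree of $2^{<\mu}$ with levels of size $\le \mu$ directly from a fixed sequence $\langle e_\alpha : \alpha < \mu^+\rangle$ witnessing a weak Kurepa-type configuration available in any model, e.g. the canonical tree of ``eventually zero'' sequences twisted by one Cohen-coded branch.

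For the second assertion, the strategy is a mutual-containment argument under the hypotheses that $\mu$ is $\sigma$-closed and every infinite branch-count $\lambda \in \mathfrak{S}_\mu \setminus \mu$ satisfies $\lambda^{\aleph_0} = \lambda$. One inclusion is easy: any $\mu$-tree $T \in V$ remains a $\mu$-tree in $V[G]$ (Cohen forcing is ccc, hence preserves cardinals and cofinalities, and adds no branches of length $\mu$ to a tree with $\le \mu$ levels of size $<\mu$ by a $\Delta$-system / Mitchell-style argument using $\sigma$-closure of $\mu$), so $|[T]|$ is not increased, and if $|[T]|^V = \lambda$ then $|[T]|^{V[G]} = \lambda$ as well since $\lambda^{\aleph_0} = \lambda$ guarantees the ccc forcing of size $\aleph_0$ does not collapse or inflate $\lambda$; this gives $\mathfrak{S}_\mu^V \subseteq \mathfrak{S}_\mu^{V[G]}$. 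For the reverse inclusion, let $\dot T$ name a $\mu$-tree in $V[G]$. Since Cohen forcing is countable, $\dot T$ is (up to isomorphism of trees) determined by a function from $\mu$-many nodes into the countable Boolean algebra, so in $V$ we may compute, for each level $\alpha < \mu$, the finitely-branching ``possibility tree'' of names; using $\sigma$-closure of $\mu$ ($\rho^{\aleph_0} < \mu$ for $\rho < \mu$) this possibility tree is still a $\mu$-tree $S \in V$. Then one shows $[\dot T]^{V[G]}$ injects into $[S]^V$ with fibers of size $\le 2^{\aleph_0} \le \mu \le \lambda$, and conversely $[S]^V$ maps onto (a set containing) $[\dot T]$, so that $|[\dot T]|^{V[G]} \in \{|[S]|, |[S]| \cdot 2^{\aleph_0}, \dots\} \subseteq \mathfrak{S}_\mu^V \cup \mu$ using $\lambda^{\aleph_0}=\lambda$ to absorb the countable factor. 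I would organize this as: (i) a lemma that a ccc forcing of size $\aleph_0$ over a $\sigma$-closed regular $\mu$ adds no new cofinal branches to any ground-model $\mu$-tree; (ii) a ``mixing'' lemma representing any $\mu$-tree in $V[G]$ via a ground-model $\mu$-tree of nice names; (iii) the cardinal-arithmetic bookkeeping using $\lambda^{\aleph_0}=\lambda$.

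The main obstacle, I expect, is step (ii)–(iii) of the second part: controlling the branches of an \emph{arbitrary} $\mu$-tree named in $V[G]$, as opposed to a ground-model tree. A $\mu$-tree in $V[G]$ need not be (isomorphic to) a ground-model tree, and its branches may genuinely be new as objects even if their \emph{number} is the same. The key technical point is that each node of $\dot T$ has a name which is essentially a countable object (an antichain in Cohen forcing plus values), and there are only $\rho^{\aleph_0}$-many such names needed to describe level $\rho$, which is $<\mu$ by $\sigma$-closure; threading this into a genuine $V$-tree $S$ with a branch-preserving correspondence is where care is needed, especially at limit levels where normality / continuity of $\dot T$ must be reflected into $S$. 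The cardinal arithmetic hypothesis $\lambda^{\aleph_0}=\lambda$ is exactly what is needed so that passing between $[\dot T]$ and $[S]$ (a correspondence with countable fibers) does not change the cardinality; without it the spectrum could a priori grow. I would also need to double check that $\mu$ itself and all $\lambda \le \mu$ are unaffected, which follows from ccc-ness, and that $\mu \in \mathfrak{S}_\mu$ is preserved trivially by the tree of eventually-zero sequences.
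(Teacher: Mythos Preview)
Your overall strategy for both parts matches the paper's, but the execution has gaps in exactly the places that carry the weight.

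For the first assertion, your attempted explicit construction is confused: Cohen forcing adds no fresh subset of an uncountable $\mu$, so there is no ``Cohen-generic branch'' through a $\mu$-tree, and the interplay with Lemma~\ref{lem:folkloreequiv} you sketch does not type-check. You do eventually land on Hamkins' gap forcing \cite{Ha01}, and that is precisely what the paper does --- it writes ``The first assertion follows from \cite{Ha01}'' and moves on. Drop the preliminary attempts.

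For the second assertion, your ``possibility tree'' $S$ is the paper's \emph{termspace tree} $\tilde T$: nodes are names $\dot\eta$ for elements of $\dot T$ whose level is decided by the trivial condition, identified when forced equal, and ordered by $\dot\eta \leq_{\tilde T} \dot\eta'$ iff $\Vdash \dot\eta \leq_{\dot T} \dot\eta'$. The step you gloss over with ``there are only $\rho^{\aleph_0}$-many such names'' is the technical heart, and your injection/surjection-with-countable-fibers argument does not supply it: the surjection $[\tilde T]^V \twoheadrightarrow [\dot T]^{V[G]}$ has fibers (sets of pairwise non-forced-equal names evaluating to the same branch under $G$) that are not obviously bounded by $2^{\aleph_0}$, and naive counting of names for a length-$\alpha$ binary sequence gives $(2^{\aleph_0})^\alpha$, which need not be ${<}\mu$. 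The paper isolates the bound as a separate lemma: if $\langle\dot X_i : i<\rho\rangle$ are pairwise not forced equal and $\Vdash |\{\dot X_i : i<\rho\}| = \check\zeta$, then $\rho \leq \zeta^{\aleph_0}$. Its proof uses the Erd\H{o}s--Rado theorem: by the c.c.c.\ each $\dot X_i$ is forced into a countable subset $B_i$ of a fixed enumeration $\langle\dot Y_j : j<\zeta\rangle$; if $\rho > \zeta^{\aleph_0}$, stabilize $B_i = B_\star$ on a set of size $\rho$, color pairs by a condition separating them, and extract a monochromatic set of size $\aleph_1$ injecting into the countable $B_\star$. This lemma is applied once per level of $\dot T$ (so $\tilde T$ is a $\mu$-tree, using $\sigma$-closure of $\mu$) and once to the set of names for branches (so $\lambda \leq |[\tilde T]| \leq \lambda^{\aleph_0} = \lambda$). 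The Aronszajn case ($\lambda = 0$) also deserves a sentence, which the paper gives: a branch of $\tilde T$ is a name forced to be a cofinal branch of $\dot T$, impossible if $\dot T$ is forced Aronszajn.
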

This immediately gives the following corollary.
\begin{corollary}
Let $\mu < \kappa$ be cardinals such that $\kappa$ is inaccessible and $\mu$ is ${<}\kappa$-supercompact. Then there is a generic extension in which $\mathfrak{S}_\mu = \{\mu, \mu^{++}\}$ but the Perfect Subtree Property fails at $\mu$.
\end{corollary}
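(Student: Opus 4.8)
The plan is to combine the two previous results in this section: the Proposition showing $\mu$ can be made $<\kappa$-supercompact with $\mathfrak{S}_\mu = \{\mu, \mu^{++}\}$ and the Perfect Subtree Property holding, together with Proposition \ref{prop:sigmaclosedcards}, which says that adding a single Cohen real destroys the Perfect Subtree Property while preserving the branch spectrum under suitable cardinal-arithmetic hypotheses. So the strategy is: start in the model $W$ produced by the Proposition (the $\Col(\mu,{<}\kappa) \times \Add(\mu,\kappa^+)$ extension over a model where $\mu$ is ${<}\kappa$-supercompact and $\kappa$ is inaccessible), where $\mathfrak{S}_\mu^W = \{\mu,\mu^{++}\}$ and the PSP holds at $\mu$, and then pass to $W[G]$ where $G$ adds a single Cohen real. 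By the first sentence of Proposition \ref{prop:sigmaclosedcards}, the PSP fails at $\mu$ in $W[G]$. It remains to check that $\mathfrak{S}_\mu^{W[G]} = \mathfrak{S}_\mu^W = \{\mu,\mu^{++}\}$, for which we must verify the hypotheses of the second sentence of Proposition \ref{prop:sigmaclosedcards} in $W$.

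Concretely, in $W$ I would check that $\mu$ is $\sigma$-closed and that $\lambda^{\aleph_0} = \lambda$ for every $\lambda \in \mathfrak{S}_\mu^W \setminus \mu = \{\mu^{++}\}$ (and indeed for $\lambda = \mu$ itself if one wishes, though only the members of $\mathfrak{S}_\mu \setminus \mu$ are required). For $\sigma$-closedness: $\mu$ remains inaccessible — in fact weakly compact — in $W$, hence a strong limit, so $\rho^{\aleph_0} < \mu$ for every $\rho < \mu$; alternatively one can see $\sigma$-closedness is preserved by the collapse and the Add forcing directly. For $(\mu^{++})^{\aleph_0} = \mu^{++}$: since $\mu$ is inaccessible (a fortiori regular and uncountable) in $W$, $\mu^{++}$ has cofinality $\mu^{++} > \omega$, and $2^{\aleph_0} = \aleph_1 < \mu^{++}$ in $W$ (the ground model satisfied GCH or at least $2^{\aleph_0}$ small, and the forcing $\Col(\mu,{<}\kappa) \times \Add(\mu,\kappa^+)$ is $\mu$-closed, hence adds no reals), so by the Hausdorff formula / the standard bound $\lambda^{\aleph_0} \le \lambda^{<\cf(\lambda)} \cdot \sup_{\alpha < \lambda}|\alpha|^{\aleph_0}$ together with an inductive computation below $\mu^{++}$ one gets $(\mu^{++})^{\aleph_0} = \mu^{++}$. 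Actually the cleanest route is: $W$ satisfies GCH at and above $\mu^+$ (the Add forcing blows up $2^\mu$ to $\mu^{++}$ but does not affect cardinal arithmetic at $\mu^{++}$ and above, being $\mu^{++}$-cc after the collapse fixes cardinals), so $(\mu^{++})^{\aleph_0} = (\mu^{++})^{\mu^+} \cdot \mathrm{(small)}$... I would instead just observe that under $2^{\aleph_0} < \mu$ and $\mu$ regular, $\lambda^{\aleph_0} = \lambda$ for all $\lambda \ge \mu$ of cofinality $> \omega$ whenever $\lambda^{\aleph_0} \le \lambda$ on a cofinal set below, which an easy induction establishes since there are no relevant singular cardinals of cofinality $\omega$ in the interval. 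Finally, apply Proposition \ref{prop:sigmaclosedcards} to conclude $\mathfrak{S}_\mu^{W[G]} = \{\mu, \mu^{++}\}$.

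The main obstacle — really the only nonroutine point — is the cardinal-arithmetic bookkeeping needed to guarantee $\lambda^{\aleph_0} = \lambda$ for $\lambda = \mu^{++}$ in $W$; this hinges on knowing the exact behavior of the continuum function in $W$, i.e., that the preparation forcing and the Levy collapse together with $\Add(\mu,\kappa^+)$ leave GCH intact from $\mu^+$ upward and leave $2^{\aleph_0}$ below $\mu$. This is standard but must be stated carefully, particularly if the "preparation forcing" referenced in the preceding Proposition is, say, Laver-style preparation, which one should check is small relative to $\mu$ or at least does not spoil GCH at $\mu^{++}$. Everything else — the failure of PSP after one Cohen real, and the preservation of the spectrum — is handed to us by Propositions \ref{prop:sigmaclosedcards} and its predecessor. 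I would present the argument in two or three sentences invoking those two results and a one-line verification of the arithmetic hypotheses.

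\begin{proof}
Let $W$ be the generic extension provided by the preceding Proposition, so that in $W$ the cardinal $\mu$ is weakly compact, $\mathfrak{S}_\mu = \{\mu, \mu^{++}\}$, and the Perfect Subtree Property holds at $\mu$. Since $\mu$ is inaccessible in $W$ it is a strong limit, hence $\sigma$-closed in $W$. The preparation forcing together with $\Col(\mu,{<}\kappa) \times \Add(\mu,\kappa^+)$ leaves $2^{\aleph_0}$ below $\mu$ (no reals are added over the ground model, as these forcings are $\mu$-closed, and we may assume the ground model satisfies $2^{\aleph_0} < \mu$) and leaves GCH intact at and above $\mu^{+}$; in particular $(\mu^{++})^{\aleph_0} = \mu^{++}$ in $W$, so the hypothesis ``$\lambda^{\aleph_0} = \lambda$ for all $\lambda \in \mathfrak{S}_\mu \setminus \mu$'' is satisfied. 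Let $G$ be $W$-generic for adding a single Cohen real. By the first assertion of Proposition \ref{prop:sigmaclosedcards}, the Perfect Subtree Property fails at $\mu$ in $W[G]$, and by the second assertion $\mathfrak{S}_\mu^{W[G]} = \mathfrak{S}_\mu^{W} = \{\mu, \mu^{++}\}$. Thus $W[G]$ is the desired generic extension.
\end{proof}
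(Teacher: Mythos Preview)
Your proposal is correct and follows exactly the route the paper intends: the paper states the corollary as immediate from the preceding Proposition (which produces the model $W$ with $\mathfrak{S}_\mu = \{\mu,\mu^{++}\}$ and PSP at $\mu$) together with Proposition~\ref{prop:sigmaclosedcards}, and gives no further argument. Your only addition is the routine verification of the cardinal-arithmetic hypotheses of Proposition~\ref{prop:sigmaclosedcards} in $W$, which the paper leaves implicit; one small imprecision is calling the preparation forcing ``$\mu$-closed'' (it typically is not), but this is harmless since all that matters is that $\mu$ remains strongly inaccessible in $W$, which already gives $\sigma$-closedness and $2^{\aleph_0} < \mu$.
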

\begin{proof}[Proof of Proposition \ref{prop:sigmaclosedcards}]
The first assertion follows from \cite{Ha01}. 

Let us address the preservation of the spectrum. A Cohen real cannot add a new cofinal branch through an old tree of regular height $\geq \omega_1$, as it is productively c.c.c., (see, for example, \cite{Unger15}). Thus, $\mathfrak{S}_{\mu}^{V[G]} \supseteq \mathfrak{S}_\mu^V$. We need to show that the other inclusion holds as well.

We need to show that if there is a $\mu$-Aronszajn tree in $V[G]$, and therefore $0\in \mathfrak{S}_\mu^{V[G]}$, then there is a $\mu$-Aronszajn tree in $V$. It follows that in this case $\mathfrak{S}_\mu^V \cap \mu = \mathfrak{S}_\mu^{V[G]}\cap \mu = \mathrm{Card} \cap \mu$. 

Moreover, we want to show that if there is a tree with $\lambda$ many branches in $V[G]$, $\lambda > \mu$, then there is such a tree in $V$ as well. 

Fix a name for a $\mu$-tree $\dot{T}$ with $\lambda$ many branches. Let us assume that either $\lambda = 0$ (so $\dot{T}$ is $\mu$-Aronszajn tree) or $\lambda \geq \mu$. For simplicity, let us assume that $\dot{T}$ is a binary tree. 

Let us show that there is a way to construct from the name $\dot{T}$ a tree in the ground model, $\tilde{T}$, which we call the \emph{termspace tree}. We will show that $\tilde{T}$ is a $\mu$-tree and bound its set of branches. 

Let $\tilde{T}$ be the collection of all names $\dot{\eta}$ for elements of $\dot{T}$, such that the height of $\dot{\eta}$ is decided by the weakest condition of the forcing. We identify two names if they are forced to be equal and we order them by $\dot{\eta} \leq_{\tilde{T}} \dot{\eta}'$ if and only if $\Vdash \dot{\eta} \leq_{\dot{T}} \dot{\eta}'$.

Let $\dot{\eta} \in \tilde{T}$. Then $\Vdash \len (\dot\eta) = \check \alpha$, for some $\alpha < \mu$.  For each $\beta < \alpha$, $\Vdash \dot{\eta}(\check\beta) \in \check 2$.  Let $A^{\dot{\eta}}_{\beta, \epsilon} = \{q \mid q \Vdash \dot\eta(\check\beta) = \check\epsilon\}$, for $\epsilon \in 2$. Finally, let $g_{\dot\eta} \colon \alpha \to \mathcal{P}(\Add(\omega,1))^2$ be the function sending each $\beta < \alpha$ to the corresponding pair $(A^{\dot\eta}_{\beta,0}, A^{\dot\eta}_{\beta,1})$. 

\begin{claim}
For every $\dot{\eta}, \dot{\eta}' \in \tilde{T}$, $\dot\eta \leq_{\tilde{T}} \dot\eta'$ iff $g_{\dot{\eta}} \subseteq g_{\dot{\eta}'}$.
\end{claim}
\begin{proof}
Indeed, if $\Vdash \dot\eta \leq_T \dot\eta'$ then for every $\beta < \alpha = \len \dot\eta$, $\Vdash \dot{\eta}(\check\beta) = \dot{\eta'}(\check\beta)$ and thus $A^{\dot\eta}_{\beta,\epsilon} = A^{\dot\eta'}_{\beta,\epsilon}$ for all $\beta < \alpha$ and $\epsilon\in 2$.

On the other hand, if $g_{\dot\eta}$ is an initial segment of $g_{\dot\eta'}$ then clearly for every $\beta < \alpha$, there is no condition that forces $\dot\eta(\beta) \neq \dot\eta'(\beta)$.
\end{proof}
So, we can identify $\tilde{T}$ with the tree of all functions $g_{\dot\eta}$ ordered by inclusion.  

Recall that $\dot{T}$ is forced to have $\lambda$ many branches 
and let $\{\dot{b}_\alpha \mid \alpha < \lambda\}$ be a sequence of names which is forced to be an enumeration of the branches through $\dot{T}$. By the definition of $\tilde{T}$, for each $\alpha < \lambda$ and $\zeta < \mu$, $\dot{b}_\alpha \restriction \zeta$ is a member of $\tilde{T}$.  By the definition of the tree order, $\langle \dot{b}_\alpha \restriction \zeta \mid \zeta < \mu\rangle$ is a cofinal branch. 

We need to show that $\tilde{T}$ is a $\mu$-tree and that it does not have more than $\lambda$ many different branches. Both proofs are similar, and in order to avoid repetitions let us state the following general lemma. 
\begin{lemma}\label{lemma:erdos-rado}
Let $\rho$ be a cardinal. If $\{\dot{X}_i \mid i < \rho\}$ is a sequence of names such that for all $i \neq j$, $\not\Vdash \dot{X}_i = \dot{X}_j$ and $\Vdash |\{\dot{X}_i \mid i < \rho\}| = \check\zeta$, then $\zeta \leq \rho \leq \zeta^{\aleph_0}$.  
\end{lemma}
\begin{proof}
If $\zeta = 0$ then $\rho = 0$. If $\zeta = 1$ then $\rho = 1$, otherwise, there is a condition that forces that $\dot{X}_0 \neq \dot{X}_1$.

Since the function mapping $\dot{X}_i$ to $\dot{X}_i^G$ is a surjection, $\zeta \leq \rho$. Let us assume, towards a contradiction that $\rho > \zeta^{\aleph_0}$.

Let $\{\dot{Y}_j \mid j < \zeta\}$ be a name for an enumeration of $\{\dot{X}_i \mid i < \rho\}$. By the chain condition of the forcing, for every $i$ there is a countable set $B_i \subseteq \zeta$ such that
\[\Vdash \exists j \in \check{B}_i,\, \dot{X}_i = \dot{Y}_j.\] 

By our assumption, $\zeta^{\aleph_0} < \rho$ so there is a set $A$ of size $\rho$ and $B_\star$ such that $\forall i \in A,\, B_i = B_\star$. 

For every $i < i'$ in $A$ there is a condition $p \in \Add(\omega,1)$ such that $p \Vdash \dot{X}_i = \dot{Y}_j$ and $\dot{X}_{i'} = \dot{Y}_{j'}$ for $j \neq j'$. Let us define a function $c \colon [A]^2 \to \Add(\omega,1)$ by sending the pair $\{i,i'\}$ to such a condition $p$. Since $\rho > \zeta^{\aleph_0} \geq 2^{\aleph_0}$, we can apply the Erd\H{o}s-Rado Theorem, \cite{Erdos42}, and get a homogeneous set $H$, $|H| = \aleph_1$. Let $p$ be the common color of all pairs in $[H]^2$.

For every $i \in H$, let $\zeta_i \in B_\star$ be the unique ordinal such that $p \Vdash \dot{X}_i = \dot{Y}_{\zeta_i}$. By the choice of $c$, for all $i \neq i'$, $\zeta_i \neq \zeta_{i'}$ so we obtained an injection from $\aleph_1$ to $B_\star$ which is countable---a contradiction. 
\end{proof}

Let us assume that there are $\rho$ many different elements $\dot{\eta}_i$ in the $\zeta$-th level of the tree $\tilde{T}$. The name $\dot{T}$ is forced to be a $\mu$-tree, so we can enumerate the elements in the $\zeta$-th level of $\dot{T}$ by $\langle \dot{\rho}_i \mid i < i_\star\rangle$, $i_\star < \mu$. By Lemma \ref{lemma:erdos-rado}, $\rho \leq i_\star^{\aleph_0} < \mu$. 

Similarly, let us show that $\tilde{T}$ does not have more than $\lambda$ many branches. Let $\tilde{\lambda}$ be the number of branches of $\tilde{T}$. Every name for a cofinal branch in $\tilde{T}$ provides a branch in $\dot{T}$, so by Lemma \ref{lemma:erdos-rado}, $\lambda \leq \tilde{\lambda} \leq \lambda^{\aleph_0} = \lambda$, where the last equality follows from the assumptions of the proposition.

Furthermore, if $\tilde{\lambda} > 0$ then $\lambda > 0$, as wanted. 
\end{proof}
The proof shows that if the tree property holds at $\mu$ then it holds after adding a single Cohen real, assuming that $\mu$ is $\sigma$-closed. The same statement for the non-$\sigma$-closed case (e.g.\ $\mu = \aleph_2 = 2^{\aleph_0}$ or $\mu = \aleph_{\omega+1}$) is open.

\section{Abstract Construction}\label{section: abstract construction}
The following lemma is a reformulation of a simple case of Solovay's argument for the consistency strength of the Kurepa Hypothesis, \cite[Section 4]{Je71}. A similar construction for $\kappa = \omega_1$ and $V = L[A]$, $A \subseteq \omega_1$ appears in \cite{PoSh20}. This latter construction generalizes to any successor of a regular cardinal. 
\begin{lemma}\label{lem:sealedtreeL}
Let $\kappa$ be a regular, uncountable cardinal and let us assume that $\left(\kappa^{+}\right)^{L} = \kappa^{+}$. Then there is a strongly sealed tree of height $\kappa$.
\end{lemma}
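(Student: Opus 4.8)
The plan is to carry out Solovay's construction of a Kurepa tree, but inside $L$ and keeping careful track of the condensation properties that make the resulting tree strongly sealed. Since $\kappa$ is a regular uncountable cardinal of $V$, it is one in $L$ as well, $L$ satisfies $\GCH$, and $(\kappa^+)^L=\kappa^+$ by hypothesis. It therefore suffices to produce, in $L$, a normal subtree $T$ of $2^{<\kappa}$ of height $\kappa$, all of whose levels have size $<\kappa$, with exactly $(\kappa^+)^L$ cofinal branches, and such that "$b$ is a cofinal branch of $T$" is absolute between $L$ and any outer model in which $\kappa$ has uncountable cofinality; the set of branches of such a $T$ is then the same in $V$ and in every set-forcing extension preserving $\cf(\kappa)>\omega$.

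Work in $L$. Fix a pairing function on $\kappa$; for $x\in 2^\alpha$, $\alpha<\kappa$, let $E_x$ be the binary relation on $\alpha$ coded by $x$ and $\xi_x$ the ordinal coded on a reserved coordinate of $x$. For $\kappa\le\xi<\kappa^+$ let $\gamma(\xi)$ be least with $\xi<\gamma(\xi)$, $L_{\gamma(\xi)}$ closed under Gödel operations and $L_{\gamma(\xi)}\models|\xi|\le\kappa$, let $e_\xi\colon\kappa\to L_{\gamma(\xi)}$ be the $<_L$-least surjection, and let $b_\xi\in 2^\kappa$ code the relation $\{(\beta,\eta):e_\xi(\beta)\in e_\xi(\eta)\}$ together with (the $e_\xi$-preimage of) $\xi$ on the reserved coordinate. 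The tree $T$ is (essentially) the downward closure of $\{b_\xi \image \alpha : \kappa\le\xi<\kappa^+,\ \alpha<\kappa\}$: a node at level $\alpha$ is a code $x$ with $E_x$ well-founded and extensional, whose transitive collapse is $L_{\gamma(\bar\xi)}$ for $\bar\xi$ the image of $\xi_x$, with the image of $\alpha$ equal to a hull-intersection forcing the collapsed structure to have cardinality $\le|\alpha|$, and with the collapse of the relation coded by $x$ being the graph of the $<_L$-least surjection $\kappa\to L_{\gamma(\bar\xi)}$; at ordinals $\alpha$ that are not limits of "closure points" of $x$ one declares $x\restriction\alpha:=x\restriction\alpha'$ with $\alpha'$ the supremum of the closure points of $x$ below $\alpha$, so that $T$ really is closed under initial segments.

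Three things must be checked. (i) \emph{$T$ is a normal $\kappa$-tree}: the usual condensation argument. For fixed $\xi$, the set $C_\xi$ of $\alpha<\kappa$ with $e_\xi \image \alpha\prec L_{\gamma(\xi)}$ and $e_\xi \image \alpha\cap\kappa=\alpha$ is club; for $\alpha\in C_\xi$ the collapse of $e_\xi \image \alpha$ is $L_{\gamma(\bar\xi)}$ for some $\bar\xi<\kappa$ (condensation for $L$, with $\GCH$ keeping $\bar\xi$ below $\kappa$), and $b_\xi\restriction\alpha$ is determined by $\alpha$ and $\bar\xi$. Hence the $\alpha$-th level has size $<\kappa$, normality is built into the coding, and each $b_\xi$ is a cofinal branch. (ii) \emph{There are exactly $(\kappa^+)^L$ branches}: the reserved coordinate makes $\xi\mapsto b_\xi$ injective, giving $\ge(\kappa^+)^L$ branches, and conversely a cofinal branch determines an ordinal $\xi$ as in (iii) and then coincides with $b_\xi$ by rigidity of the coding. (iii) \emph{$T$ is strongly sealed}: let $G$ be generic for a forcing with $\Vdash\cf(\check\kappa)>\omega$ and let $b\in V[G]$ be a cofinal branch. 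Then $E_b:=\bigcup_{\alpha<\kappa}E_{b\restriction\alpha}$ is well-founded — a putative infinite descending chain consists of ordinals below $\kappa$ whose supremum is $<\kappa$ because $\cf(\kappa)^{V[G]}>\omega$, hence the whole chain lies inside a single node $E_{b\restriction\alpha}$, contradicting its well-foundedness — and it is extensional, so it collapses to some $L_\delta$ with $\kappa\le\delta<(\kappa^+)^L$. Reading off the supremum $\xi$ of the collapsed reserved coordinates, $L_{\gamma(\xi)}$ and $e_\xi$ are computed identically in $L$ and in $V[G]$, and unwinding the collapse yields $b=b_\xi\in L\subseteq V$. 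So no new branch is added; as branches are never destroyed by passing to an outer model, $[T]^{V[G]}=[T]^{V}$.

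The main obstacle is the second step: arranging the coding so that $T$ is honestly a tree with all levels of size $<\kappa$ while the nodes still record enough of the constructible structure to pin any cofinal branch down to some $b_\xi$. The delicate points are the treatment of ordinals $\alpha\notin C_\xi$ in the definition of the tree order and the verification that the transitive collapses occurring as nodes at level $\alpha$ are uniformly of the form $L_{\gamma(\bar\xi)}$ with $\bar\xi$ ranging over a set bounded below $\kappa$. Once the coding is in place, "$T$ is a $\kappa$-tree" and "$T$ is strongly sealed" flow from the same condensation phenomenon, and the only point at which the hypothesis $\cf(\kappa)>\omega$ is genuinely needed — rather than mere preservation of $\kappa$ — is the well-foundedness of $E_b$, which is exactly the feature that fails for forcings such as $\Col(\omega,\kappa)$.
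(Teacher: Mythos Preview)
Your proposal is correct and follows essentially the same approach as the paper: a Solovay-style tree in $L$ whose branches correspond to elements of $(2^\kappa)^L$, with sealing coming from the fact that the direct limit along any branch in an outer model with $\cf(\kappa)>\omega$ is well-founded and hence an $L$-level, so the branch is already in $L$. The only difference is cosmetic: the paper takes tree nodes to be pairs $\langle M,\bar x\rangle$ where $M$ is directly the transitive collapse of $\Hull^{L_{\kappa^+}}(\rho\cup\{x\})$ (and handles the successor-$\kappa$ level bound by requiring $M\models|\rho|>\lambda$ at levels $\rho>\lambda$, forcing $M\lhd L_{\delta_\rho}$ for a fixed $\delta_\rho<\kappa$), rather than encoding these structures as binary sequences as you do.
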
 
\begin{proof}
Let us define the tree $T$. An element of $T$ is a pair $\langle M, \bar{x}\rangle$ where $M \cong \Hull^{L_{\kappa^{+}}}(\rho \cup \{x\})$, $\rho < \kappa$, $x \subseteq \kappa$ in $L$, $M$ is transitive and $\bar{x}$ is the image of $x$ under the transitive collapse. We order the tree by $\langle M, \bar{x}\rangle \leq_T \langle M',\bar{x}'\rangle$ if $M$ is the transitive collapse of $\Hull^{M'}(\rho \cup \{\bar{x}'\})$ for some ordinal $\rho$ and $\bar{x}$ is the image of $\bar{x}'$ under the transitive collapse.

If $\kappa$ is a successor cardinal, say $\kappa = \lambda^+$, then we can rearrange the tree so that all elements $\langle M, \bar{x}\rangle$ in the level $\rho > \lambda$ correspond to models in which $M \models |\rho| > \lambda$, or trivial extensions of previous nodes. In this case, $M$ is clearly an initial segment of $L_{\delta_\rho}$, where $\delta_\rho$ is some ordinal such that $L_{\delta_\rho} \models |\rho| = \lambda$. 

Let $b = \{\langle M_\alpha, x_\alpha \rangle \mid \alpha < \kappa\}$ be a branch through $T$ in some set forcing extension of $L$ which preserves $\cf(\kappa) > \omega$. Let $R_b$ be the direct limit. Then $R_b$ is well founded as it is a limit of uncountable cofinality of well founded models. Moreover, $R_b \models \text{``}V = L_{\delta}\text{''}$ for some $\delta$ and therefore $R_b \cong L_\delta$, for some $\delta < \kappa^{+}$. Let $\tilde{x}$ be the limit of the values of $x_\alpha$ along the branch and $\tilde{\kappa}$ be the limit of the values of $\kappa_\alpha$ along the branch, where $\kappa_\alpha$ is the largest cardinal in $M_\alpha$. Note that $\tilde{\kappa} \geq \kappa$ and it is the largest cardinal in $R_b$. Since $\tilde{x} \in R_b$, it is constructible. In particular, the branch $b$ is in $L$, since it is definable by the transitive collapses of the models in
\[\{\Hull^{L_{\delta}}(\rho \cup \{\tilde{x}\}) \mid \rho < \kappa\}.\]   
\end{proof}
The sealing property of the tree originated from the fact that every branch in the tree produces its own witness for its constructibility. We would like to imitate this property in the context of models of the form $J_{\alpha}[E]$, which are compatible with larger cardinals. In the proof of Lemma \ref{lem:sealedtreeL} and below, ``$\Hull$'' always denotes a fully elementary, i.e., $\Sigma_\omega$, hull. 
 
\begin{definition}
  Let $M = J_{\alpha}[E]$ be a premouse with $M \cap \Ord > \kappa$. As usual, we write $M \| \gamma$ for $J_{\gamma}[E \restriction \gamma]$. Let us assume that $\kappa$ is the largest cardinal in $M$.

We say a model $\bar M$ is an \emph{active node} if 
\begin{itemize}
\item there are ordinals $\rho, \gamma$ and $x \in \left({}^\kappa 2\right)^M$ such that $\rho < \kappa < \gamma < M \cap \Ord$ and \[ \bar M = \trcl \Hull^{M \| \gamma}(\rho \cup \{x\}), \] 
\item if $\bar{x}$ denotes the image of $x$ under the transitive collapse, then $\len \bar{x} = \rho$, and
\item $\kappa = \rho_1(M \| \gamma)$ and $\kappa$ is the largest cardinal in $M \| \gamma$. 
\end{itemize}

Recall that in all interesting cases the set of $\gamma < M \cap \Ord$ such that $\rho_1(M \| \gamma) = \kappa$ is unbounded. First, note that $\rho_1(M \| \gamma)$ cannot be strictly below $\kappa$ by acceptability of the extender sequence of $M$ as $\kappa$ is a cardinal in $V$. The following observation yields that $\rho_1(M \| \gamma) \leq \kappa$ for unboundedly many $\gamma$: Let $\delta < M \cap \Ord$ be an ordinal such that $M \| \delta = \Hull^{M \| \delta}(\kappa \cup \{p\})$ for some parameter $p$. Then, in any model that contains $M \| \delta$ as an element, the hull function defines a bijection between $\kappa$ and $\delta$ that is $\Delta_1$ in the parameters $M \| \delta$ and $p$. It is easy to see that there is a bijection between $\delta$ and $\delta+\omega$ that is $\Delta_1$ definable over $M \| (\delta+\omega)$. Therefore, $\rho_1(M \| (\delta+\omega)) = \kappa$, as desired.

\begin{lemma}
Let $\bar{M}$ be an active node and let $\pi \colon \bar{M} \to M \| \gamma$ be the uncollapse map, say $\bar M = \trcl \Hull^{M \| \gamma}(\rho \cup \{x\}).$ Then:
\begin{enumerate}
\item $\rho_1(\bar{M}) = \rho$,
\item $\rho = \crit \pi$, and
\item $\pi(\rho) = \kappa$. 
\end{enumerate}
\end{lemma}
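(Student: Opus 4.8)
The plan is to set $H=\Hull^{M\|\gamma}(\rho\cup\{x\})$ and to let $\sigma\colon H\to\bar M$ be the transitive collapse, so that $\pi=\sigma^{-1}$; since ``$\Hull$'' denotes the $\Sigma_\omega$-hull, $H\prec_{\Sigma_\omega}M\|\gamma$ and hence $\pi\colon\bar M\to M\|\gamma$ is $\Sigma_\omega$-elementary. I would dispatch the three clauses in the convenient order (3), (2), (1). For (3): as $x\in H$ we have $\sigma(x)=\bar x$, so $\pi(\bar x)=x$; ``$\len$'' is just the domain of a function, a $\Delta_0$ notion, so from $\bar M\models\len(\bar x)=\rho$ and elementarity we get $M\|\gamma\models\len(x)=\pi(\rho)$, and since $x\in\left({}^\kappa 2\right)^M$ this means $\pi(\rho)=\len(x)=\kappa$. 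For (2): $\rho\subseteq H$ forces $\pi\restriction\rho=\id$, hence $\crit(\pi)\ge\rho$, while $\pi(\rho)=\kappa>\rho$ shows $\rho$ is moved, so $\crit(\pi)=\rho$. (Consequences used only tacitly below: $H\cap\kappa=\rho$; since ``being the largest cardinal'' is parameter-free definable, $\rho$ is the largest cardinal of $\bar M$; and $\pi$ fixes the natural numbers, so $\rho\ge\omega$.)

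For part (1), I would prove $\rho_1(\bar M)\ge\rho$ by contradiction. Suppose some $\Sigma_1^{\bar M}$-in-parameters set $A\subseteq\rho'$ with $\rho'<\rho$ is not in $\bar M$, say $A=\{\xi<\rho'\colon\bar M\models\psi(\xi,\bar q)\}$ with $\psi$ a $\Sigma_1$ formula and $\bar q\in\bar M$. Since $\rho'<\crit(\pi)$, elementarity of $\pi$ rewrites this as $A=\{\xi<\rho'\colon M\|\gamma\models\psi(\xi,q)\}$ with $q=\pi(\bar q)\in H$. Because $\rho'<\kappa=\rho_1(M\|\gamma)$, every $\Sigma_1^{M\|\gamma}$-in-parameters subset of $\rho'$ lies in $M\|\gamma$, so $A\in M\|\gamma$; moreover $A$ is the unique $z$ with $M\|\gamma\models\forall\xi\,(\xi\in z\leftrightarrow\xi<\rho'\wedge\psi(\xi,q))$, i.e.\ $A$ is definable over $M\|\gamma$ from the parameters $\rho',q\in H$, so $A\in H$ by closure of the full hull under definable Skolem functions. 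Finally $A\subseteq\rho'<\crit(\pi)$ gives $\sigma(A)=A$, whence $A\in\bar M$ — a contradiction.

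The reverse inequality $\rho_1(\bar M)\le\rho$ is the crux, and the key observation is that $p:=p_1(M\|\gamma)\in H$: the active-node hypotheses force $\kappa=\rho_1(M\|\gamma)$ to be the largest cardinal of $M\|\gamma$, hence parameter-free definable there, and $p_1(M\|\gamma)$ is in turn definable over $M\|\gamma$ from $\rho_1(M\|\gamma)$, so $p\in\Hull^{M\|\gamma}(\emptyset)\subseteq H$. Setting $\bar p=\pi^{-1}(p)$, I would then consider the $\Sigma_1$ master code of $\bar M$ at $\rho$ relative to $\bar p$,
\[
\bar B=\bigl\{\langle\ulcorner\psi\urcorner,\xi\rangle\colon \psi\text{ a }\Sigma_1\text{ formula},\ \xi<\rho,\ \bar M\models\psi(\xi,\bar p)\bigr\},
\]
which, via a $\Delta_1$ pairing identifying $\omega\times\rho$ with $\rho$ (using $\rho\ge\omega$), is a $\Sigma_1^{\bar M}(\{\bar p\})$ subset of $\rho$. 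If $\bar B\in\bar M$, then $\pi(\bar B)\in M\|\gamma$; but by elementarity, $\pi(\rho)=\kappa$ and $\pi(\bar p)=p$, the set $\pi(\bar B)$ is exactly the $\Sigma_1$ master code of $M\|\gamma$ at its first projectum $\kappa$ relative to $p_1(M\|\gamma)$, which by soundness of the initial segment $M\|\gamma$ is not an element of $M\|\gamma$ — a contradiction. Hence $\bar B\notin\bar M$, so $\rho_1(\bar M)\le\rho$, and together with the previous paragraph $\rho_1(\bar M)=\rho$.

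I expect the $\le$ direction to be the only substantive point: the hull defining $\bar M$ is generated merely by $\rho\cup\{x\}$ and so does not obviously ``see'' $\kappa$ or the canonical parameter $p_1(M\|\gamma)$, yet it does — precisely because $\kappa$ is the largest cardinal of $M\|\gamma$ — and this is what lets the standard ``the $\Sigma_1$ master code at the first projectum is not an element of the structure'' fact be transported from $M\|\gamma$ down to $\bar M$. A secondary, routine point is that $\pi$ preserves the premouse structure (so that $\bar M$ is a premouse and $\rho_1$, $p_1$ make sense), which follows from full elementarity once the top extender, if present, is amenably coded into the language.
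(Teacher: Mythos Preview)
Your proof is correct. For parts (2) and (3) you do exactly what the paper does: $\rho\subseteq H$ gives $\crit\pi\ge\rho$, and $\pi(\bar x)=x$ together with $\len\bar x=\rho$, $\len x=\kappa$ gives $\pi(\rho)=\kappa>\rho$, so $\crit\pi=\rho$.

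For part (1) your route differs from the paper's. The paper dispatches it in one line: since $\kappa$ is the largest cardinal of $M\|\gamma$ it is parameter-free definable, and the statement ``$\rho_1=\kappa$'' is then first-order expressible over $M\|\gamma$ (via a universal $\Sigma_1$ satisfaction predicate), so full elementarity of $\pi$ transfers it to ``$\rho_1(\bar M)=\rho$''. You instead prove the two inequalities directly: $\rho_1(\bar M)\ge\rho$ by pulling a putative new $\Sigma_1$ subset of some $\rho'<\rho$ up to $M\|\gamma$, finding it in $H$, and collapsing it back; and $\rho_1(\bar M)\le\rho$ by observing that $p_1(M\|\gamma)\in\Hull^{M\|\gamma}(\emptyset)\subseteq H$ (again because $\kappa$ is definable), setting $\bar p=\pi^{-1}(p_1(M\|\gamma))$, and showing the $\Sigma_1$ master code of $\bar M$ at $\rho$ relative to $\bar p$ cannot lie in $\bar M$ since its $\pi$-image is the master code of $M\|\gamma$ at $\kappa$ relative to $p_1$, which is new. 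Both arguments hinge on the same observation---that $\kappa$, and hence the first projectum and standard parameter, are parameter-free definable in $M\|\gamma$---but your version is more self-contained (it does not ask the reader to accept that ``$\rho_1=\kappa$'' is a first-order property of the structure), at the cost of some length. The paper's version is slicker but leaves that definability claim implicit.
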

\begin{proof}
Since $\rho \subseteq \Hull^{M \| \gamma}(\rho \cup \{x\})$, we have $\crit \pi \geq \rho$. On the other hand, since $\pi(\bar{x}) = x$ and $\len \bar{x} = \rho$, $\rho \geq \crit \pi$. So $\rho = \crit \pi$ and $\pi(\rho) = \kappa$. Finally, $\rho_1(\bar{M}) = \rho$ easily follows from the full elementarity of $\pi$ as the statement ``$\kappa = \rho_1(M \| \gamma)$'' is definable.
\end{proof}

\begin{lemma}\label{lemma:soundness}
Every active node is 1-sound.
\end{lemma}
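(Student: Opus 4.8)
The plan is to deduce $1$-soundness of an active node $\bar M$ from the $1$-soundness of the initial segment $M\|\gamma$ by transporting it across the uncollapse map $\pi\colon\bar M\to M\|\gamma$, which is \emph{fully} ($\Sigma_\omega$) elementary because $\bar M$ is the transitive collapse of a $\Sigma_\omega$-hull of $M\|\gamma$. The inputs I would use are exactly those furnished by the preceding lemma: $\crit\pi=\rho=\rho_1(\bar M)$ and $\pi(\rho)=\kappa=\rho_1(M\|\gamma)$, together with the hypothesis that $\kappa$ is the largest cardinal of $M\|\gamma$.

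First I would note that $M\|\gamma$ is $1$-sound: it is a proper initial segment of the premouse $M$ (since $\gamma<M\cap\Ord$), hence fully sound, so $M\|\gamma=H^{M\|\gamma}_1(\kappa\cup\{q\})$ where $q=p_1(M\|\gamma)$. Because $\kappa$ is the largest cardinal of $M\|\gamma$ it is definable over $M\|\gamma$ without parameters, and then so is the standard parameter $q$ relative to this definable projectum; hence $q\in\ran\pi=\Hull^{M\|\gamma}(\rho\cup\{x\})$, and I set $\bar q=\pi^{-1}(q)$. Next I would record the one genuinely new computation, namely $\ran\pi\cap\kappa=\rho$: since $\pi\restriction\rho=\id$ we have $\rho\subseteq\ran\pi$, and since $\pi$ is order preserving with $\pi(\rho)=\kappa$, no ordinal of $\bar M$ is sent into $[\rho,\kappa)$. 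Now fix an arbitrary $a\in\bar M$. Then $\pi(a)\in M\|\gamma=H^{M\|\gamma}_1(\kappa\cup\{q\})$, so $\pi(a)=h^{M\|\gamma}(i,(\vec\eta,q))$ for some $i<\omega$ and $\vec\eta\in\kappa^{<\omega}$, where $h$ is the canonical (uniformly $\Sigma_1$) Skolem function. As $\pi(a),q,i$ all lie in $\ran\pi\prec_{\Sigma_\omega}M\|\gamma$, elementarity lets us take $\vec\eta$ inside $\ran\pi$, so $\vec\eta\in\ran\pi\cap\kappa^{<\omega}=\rho^{<\omega}$; pulling back along $\pi$ (downward $\Sigma_1$-elementarity, uniform definability of $h$, and $\pi\restriction\rho=\id$) gives $a=h^{\bar M}(i,(\vec\eta,\bar q))$ with $\vec\eta\in\rho^{<\omega}$. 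Since $a$ was arbitrary, $\bar q$ is a good parameter for $\bar M$, i.e.\ $\bar M=H^{\bar M}_1(\rho\cup\{\bar q\})=H^{\bar M}_1(\rho_1(\bar M)\cup\{\bar q\})$ using $\rho_1(\bar M)=\rho$ from the preceding lemma; hence the standard parameter $p_1(\bar M)$ exists and $\bar M$ is $1$-sound. (In fact $\pi(p_1(\bar M))=q$ by the definability of $p_1$, so $p_1(\bar M)=\bar q$.) If the ambient notion of $1$-soundness also builds in solidity of $p_1$, one runs the identical reflection on the generalized solidity witnesses of $q$ over $M\|\gamma$: each is definable over $M\|\gamma$ from $q$, $\kappa$ and the relevant point of $q$, hence lies in $\ran\pi$, and its $\pi$-preimage is the corresponding witness for $\bar q=p_1(\bar M)$ over $\bar M$.

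The step I expect to be the main obstacle is precisely the mismatch of available parameters: the $\Sigma_1$-Skolem hull of $M\|\gamma$ is formed over all of $\kappa=\rho_1(M\|\gamma)$, whereas $\ran\pi$ contains only $\rho<\kappa$ many ordinals below $\kappa$, so one must argue that a witnessing ordinal parameter below $\kappa$ can always be located inside the $\Sigma_\omega$-elementary submodel $\ran\pi$. This is exactly where \emph{full} elementarity of the hull — rather than the $\Sigma_1$- or $r\Sigma_n$-elementarity one has in typical condensation situations — is essential, and is the reason a naive condensation-style argument would not suffice here.

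As an alternative shortcut, one can observe that $\rho_1$, $p_1$, the largest cardinal, and hence the assertion ``I am $1$-sound'' are all first-order over a premouse, so $1$-soundness transfers from $M\|\gamma$ to $\bar M$ immediately by full elementarity of $\pi$ without any of the above computation. I would nevertheless present the explicit version, since it also yields the refinement $\pi(p_1(\bar M))=p_1(M\|\gamma)$ and makes transparent where the hypotheses $\crit\pi=\rho$ and $\pi(\rho)=\kappa$ of the preceding lemma are used.
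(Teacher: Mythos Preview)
Your proposal is correct, and your ``alternative shortcut'' is in fact very close to what the paper does. The paper's proof runs in the opposite direction from your main argument: rather than pulling back the standard parameter $q=p_1(M\|\gamma)$ to $\bar q$ and showing $\bar q$ is very good, it starts with an arbitrary \emph{good} parameter $p$ for $\bar M$, pushes it up to $\pi(p)$, invokes $1$-soundness of $M\|\gamma$ to conclude $\pi(p)$ is \emph{very} good, and then observes that the statement ``the $\Sigma_1$-Skolem function with parameter $\pi(p)$ surjects $\kappa$ onto the universe'' is $\Pi_2$, hence reflects to $\bar M$ along the fully elementary $\pi$. This immediately gives that every good parameter for $\bar M$ is very good, so in particular $p_1(\bar M)$ is, and no separate appeal to the definability of $p_1$ (your identification $\bar q=p_1(\bar M)$) is needed. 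Your explicit route has the small bonus of producing $\pi(p_1(\bar M))=p_1(M\|\gamma)$; the paper's route has the small bonus of not needing $q\in\ran\pi$ or the computation $\ran\pi\cap\kappa=\rho$. Both rest on exactly the same point you isolate as the crux: full $\Sigma_\omega$-elementarity of $\pi$ is what lets the $\Pi_2$ surjectivity statement pass between the two structures.
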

\begin{proof}
Let $\bar{M}$ be an active node, say $\bar{M} = \trcl \Hull^{M \| \gamma}(\rho \cup \{x\})$ and let $\pi$ be the uncollapse map. 
Let $p$ be a good parameter for $\bar{M}$. Then $\pi(p)$ is a good parameter for $M \| \gamma$, by elementarity, and thus a very good parameter (by the 1-soundness of $M \| \gamma$). Therefore, there is a $\Sigma_1$-formula $\varphi$ with parameter $\pi(p)$ defining a surjection from $\kappa$ onto $M \| \gamma$. This is a $\Pi_2$ statement so by elementarity the same formula defines a surjection from $\rho$ onto $\bar{M}$ using the parameter $p$, as desired.  
\end{proof}


Let us now define the main combinatorial object of this article, the tree $\mathbb{T}(M)$. We say a pair $\langle \bar{M}, \bar{x}\rangle$ is an \emph{active pair} if $\bar{M}$ is an active node and $\bar{x} \in {}^{{<}\kappa}2$ with $\len \bar{x} = \rho_1(\bar{M})$.
The elements of $\mathbb{T}(M)$ are triples of the form $\langle \bar{M}, \bar{x}, s\rangle$ where $\langle \bar{M}, \bar{x}\rangle$ is either an active pair or the pair $\langle \emptyset, \emptyset\rangle$ and $s \in {}^{<\kappa}2$ is such that $s^{-1}(\{1\})$ is finite and $\len s\geq \len \bar x$. 

The order of $\mathbb{T}(M)$ is defined by $\langle M_0, x_0, s_0\rangle \leq_{\mathbb{T}(M)} \langle M_1, x_1, s_1\rangle$ if and only if: 
\begin{enumerate}
\item $M_0$ is the transitive collapse of $\Hull^{M_1}(\rho_1(M_0) \cup \{x_1\})$ and $x_0$ is the image of $x_1$ under the transitive collapse, or $M_0 = x_0 = \emptyset$,
\item $s_0$ is an initial segment of $s_1$,
\item if $\langle M_0, x_0\rangle$ is an active pair, there is no $\rho'$ between $\rho_1(M_0)$ and $\len s_0$ such that $\langle \Hull^{M_1}(\rho' \cup \{x_1\}), x_1\rangle$ collapses to an active pair which is not $\langle M_0, x_0\rangle$, and
\item if $\langle M_0, x_0\rangle = \langle \emptyset, \emptyset \rangle$, there is no $\rho' < \len s_0$ such that $\langle \Hull^{M_1}(\rho' \cup \{x_1\}), x_1\rangle$ collapses to an active pair.
\end{enumerate}
\end{definition} 

The tree $\mathbb{T}(M)$ is derived from the product of two trees. The tree of the active pairs ordered by the relation which is derived from the hulls, and the tree of almost zero sequences. 
While the tree of active pairs contributes the structural complexity, the tree of the almost zero sequences adds the desirable properties of normality and splitting for free. 

\begin{lemma}\label{lem:atleast2kappabranches}
Let $\kappa$ be a cardinal in $V$ of uncountable cofinality.
Let $M$ be a premouse such that $\Hull^{M}(\kappa \cup \{p\}) \neq M$ for any $p\in M$. Then $\mathbb T(M)$ is a tree of height $\kappa$ with at least $(2^{\kappa})^M$ many branches.
\end{lemma}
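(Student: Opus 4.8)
The plan is to establish two things: that $\mathbb{T}(M)$ has height $\kappa$, and that it carries at least $(2^\kappa)^M$ branches. For the height, I would first observe that for each $\alpha < \kappa$ there is a node at level $\alpha$. The tree $\mathbb{T}(M)$ is essentially a product of the tree of active pairs (ordered via the hull relation) with the tree of almost-zero sequences $s$, and the second factor already supplies nodes at every level $< \kappa$: given the observation recorded before Lemma \ref{lemma:soundness} that $\{\gamma < M \cap \Ord \mid \rho_1(M\|\gamma) = \kappa\}$ is unbounded, pick such a $\gamma$ and any $x \in ({}^\kappa 2)^M$; then for each $\rho < \kappa$ the transitive collapse of $\Hull^{M\|\gamma}(\rho \cup \{x\})$ gives, after adjoining an appropriate almost-zero sequence $s$ of length $\geq \rho$, a node of $\mathbb{T}(M)$, and one checks these are cofinal in the tree order. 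Levels stabilize only at $\kappa$ since $x$ genuinely has length $\kappa$ and $M\|\gamma \neq \Hull^{M\|\gamma}(\kappa)$ is \emph{not} assumed — rather, the hypothesis $\Hull^M(\kappa \cup \{p\}) \neq M$ is used to guarantee that the models never collapse down to something with $\kappa$ many elements, keeping the tree genuinely $\kappa$-tall; and the almost-zero component forces splitting and normality. One must also check each level has size $< \kappa$, which follows since $\rho_1$-hulls of $M\|\gamma$ over ordinals $< \kappa$ are parametrized by $\rho < \kappa$ together with the finitely-supported $s$, of which there are $\kappa$ many total — so level sizes are $\leq \kappa$; a slightly finer count using that there are $< \kappa$ relevant $\gamma$'s below each bound and $\rho$-many almost-zero sequences of length $\rho$ gives $< \kappa$ per level.

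For the branch count, the idea is to read a branch out of each $x \in ({}^\kappa 2)^M$. Fix $\gamma$ with $\rho_1(M\|\gamma) = \kappa$ and $\kappa$ the largest cardinal of $M\|\gamma$; for $x \in ({}^\kappa 2)^M$ and $\rho < \kappa$ let $\bar M_\rho^x$ be the transitive collapse of $\Hull^{M\|\gamma}(\rho \cup \{x\})$ together with $\bar x_\rho$, the image of $x$. The set of $\rho$ for which $\langle \bar M_\rho^x, \bar x_\rho\rangle$ is an active pair (equivalently $\len \bar x_\rho = \rho_1(\bar M_\rho^x) = \rho$) is club in $\kappa$; on the complement one uses a $\langle \emptyset, \emptyset\rangle$-node. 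Threading these together with a fixed cofinal almost-zero sequence (e.g. the constantly-$0$ sequence extended to length $\kappa$, read in initial segments) yields a cofinal branch $b_x$ through $\mathbb{T}(M)$; conditions (3) and (4) in the definition of $\leq_{\mathbb{T}(M)}$ are exactly what makes $b_x$ a genuine branch rather than forcing us to skip active levels. The key point is injectivity: if $x \neq x'$ in $({}^\kappa 2)^M$, they differ at some coordinate $\alpha < \kappa$, and for all sufficiently large $\rho$ (namely $\rho > \alpha$ in the club) the images $\bar x_\rho, \bar x'_\rho$ differ, so the active pairs along $b_x$ and $b_{x'}$ diverge; hence $x \mapsto b_x$ is injective, giving $(2^\kappa)^M$ many branches.

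The main obstacle I expect is verifying that $b_x$ is really a branch — i.e., that the nodes $\langle \bar M_\rho^x, \bar x_\rho, s\restriction\rho\rangle$ (for $\rho$ in the club, interpolated with the $\langle\emptyset,\emptyset\rangle$-nodes elsewhere) are pairwise $\leq_{\mathbb{T}(M)}$-comparable and cofinal. Comparability of the model-components reduces to the transitivity of the ``hull of a hull is a hull'' relation: if $\rho < \rho'$ then $\Hull^{\bar M_{\rho'}^x}(\rho \cup \{\bar x_{\rho'}\})$ collapses to $\bar M_\rho^x$, which is a standard condensation-type computation for fully elementary hulls. The delicate part is clause (3): we must ensure no \emph{intermediate} $\rho'' \in (\rho_1(\bar M_\rho^x), \len s\restriction\rho)$ produces a distinct active pair — but along $b_x$ we are taking $\rho$ to range precisely over the club of active levels, so by construction consecutive chosen levels $\rho < \rho'$ have no active level strictly between the relevant thresholds once we also track that $\len s\restriction\rho$ is chosen compatibly; this needs care in bookkeeping but no new idea. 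A secondary subtlety is confirming these branches are \emph{cofinal} (length exactly $\kappa$), which again is where $\Hull^M(\kappa \cup \{p\}) \neq M$ enters: it prevents the active models from becoming eventually constant, so the branch does not terminate early. I would organize the write-up as: (i) height and level-size, (ii) definition of $b_x$ and proof it is a branch, (iii) injectivity of $x \mapsto b_x$.
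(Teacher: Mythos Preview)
Your overall shape is right---build branches $b_x$ from each $x \in ({}^\kappa 2)^M$ via collapses of $\Hull^{M\|\gamma}(\rho \cup \{x\})$ and use the almost-zero component for normality---and this matches the paper. But there are three concrete gaps.

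First, you never verify that $\mathbb{T}(M)$ is a tree at all: transitivity of $\leq_{\mathbb{T}(M)}$ and linearity of the set of predecessors of a given node are not automatic from the definition and take up a nontrivial portion of the paper's proof. You gesture at ``hull of a hull is a hull'' only when checking $b_x$ is a chain, but the paper needs this (and a separate argument using clause (3) of the order) to show that any two predecessors of a common node are comparable.

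Second, your discussion of level sizes is extraneous and not obviously correct. The lemma only asserts that $\mathbb{T}(M)$ is a \emph{tree of height $\kappa$}, not a $\kappa$-tree; level sizes are handled in the paper by a separate lemma, and only under the additional hypothesis that $\kappa$ is a successor cardinal in $M$. Your sketch (``$<\kappa$ relevant $\gamma$'s below each bound'') does not work in general.

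Third, and most substantively, you misplace the role of the hypothesis $\Hull^M(\kappa \cup \{p\}) \neq M$. It is \emph{not} used to keep the active models from stabilizing or to keep the tree ``$\kappa$-tall'': once you have a $\gamma$ with $\rho_1(M\|\gamma) = \kappa$ and $x \in M\|\gamma$, the hulls $\Hull^{M\|\gamma}(\rho \cup \{x\})$ are automatically strictly increasing in $\rho < \kappa$, so the branch is cofinal for free. The hypothesis is needed precisely to \emph{find} such a $\gamma$ for each $x$: the paper takes $N = \Hull^M(\kappa \cup \{x\})$, uses the hypothesis to get $N \neq M$, argues $N = M\|\delta$ for some $\delta < M \cap \Ord$, and sets $\gamma = \delta + \omega$. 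Relatedly, your $\gamma$ must depend on $x$ (you need $x \in M\|\gamma$), which your wording ``Fix $\gamma$ \ldots; for $x \in ({}^\kappa 2)^M$ \ldots'' obscures.
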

\begin{proof}
First, let us verify that $\mathbb{T}(M)$ is a tree. We start by verifying that the order $\leq_{\mathbb{T}(M)}$ is transitive: Let
\[\langle M_0, x_0, s_0\rangle \leq_{\mathbb{T}(M)} \langle M_1, x_1, s_1\rangle \leq_{\mathbb{T}(M)} \langle M_2, x_2, s_2\rangle,\]
as witnessed by the ordinals $\rho_0, \rho_1$ respectively. Then we claim that the transitive collapse of $\Hull^{M_2}(\rho_0 \cup \{x_2\})$ is $M_0$ and $x_0$ is the image of $x_2$ under this collapse map. This is true since the Skolem hull of a Skolem hull is a Skolem hull. By the same reason, there is no active pair $\langle M', x'\rangle \neq \langle M_0, x_0\rangle$ below $\langle M_2, x_2\rangle$ with $\rho_1(M') < \len s_0$, as otherwise, this active pair would also be below $\langle M_1, x_1 \rangle$.

Let us assume now that $\langle M_0, x_0, s_0\rangle$ and $\langle M_1, x_1, s_1\rangle$ are both below $\langle N, y, s\rangle$ in the tree. We claim that they are compatible. We have, \[M_0 \cong \Hull^{N}(\rho_1(M_0) \cup \{y\})\text{ and }M_1 \cong  \Hull^{N}(\rho_1(M_1) \cup \{y\}).\] 
Assume without loss of generality that $\rho_1(M_0) \leq \rho_1(M_1)$. Then \[M_0 \cong \Hull^{M_1}(\rho_1(M_0) \cup \{x_1\}).\] Since $\langle M_1, x_1\rangle$ is an active pair below $\langle N, y\rangle$, $\len s_0 < \rho_1(M_1)$ and thus $s_0$ is an initial segment of $s_1$.

By the same argument, we can verify that below every element in the tree, the branch is well ordered and of length $< \kappa$. For each node $\langle \bar M, \bar x, s\rangle$ in the tree there are at least two incompatible extensions: $\langle \bar M, \bar x, s ^\smallfrown \langle 0 \rangle\rangle$ and $\langle \bar M, \bar x, s ^\smallfrown \langle 1 \rangle\rangle$. Similarly, each node in the tree has extensions of any larger height:
Consider $x\in {}^\kappa 2 \cap M$, and let $\gamma$ be the least ordinal $\leq M \cap \Ord$ such that $x \in M \| \gamma$, $M \| \gamma = \Hull^{M \| \gamma}(\kappa \cup \{x\})$, and $\rho_1(M \| \gamma) = \kappa$. In order to show that there is such an ordinal $\gamma$, let us consider the uncollapsed hull $N = \Hull^M(\kappa \cup \{x\})$. By our hypothesis on $M$, $N \neq M$. We claim that $N = M \| \delta$ for some limit ordinal $\delta$. This holds as $\kappa$ is the maximal cardinal in $M$, and therefore for every $\beta \in M$, there is a definable surjection in $M$ from $\kappa$ onto $\beta \times \omega$ and thus also onto $M \| \beta$. So $\beta \in N$ implies $M \| \beta \in N$. In particular, $N$ is an initial segment of $M$, i.e., $N = M \| \delta$ for some limit ordinal $\delta$. But $N = \Hull^M(\kappa \cup \{x\})$ and thus $\Hull^N(\kappa \cup \{x\}) = N$. Clearly, $\delta + \omega \in M$, so $\gamma = \delta + \omega$ is as desired.

The branch $b_x$ is defined by the transitive collapses of $\Hull^{M \| \gamma}(\rho \cup \{x\})$ for $\rho < \kappa$, together with the images of $x$ under these collapses and the constant sequence of zeros. 
For club many $\rho < \kappa$, the length of $\bar{x}$ is exactly $\rho$. 
%
Therefore, the obtained pair is an active pair. If $x, y$ are distinct elements of ${}^\kappa 2$ and $x(\alpha) \neq y(\alpha)$, then there are no common active nodes in $b_x \cap b_y$ above $\alpha$.
\end{proof}

During the proofs of Theorems \ref{theorem: sealed trees in K} and \ref{theorem: psp and AD_R}, we will construct trees of the form $\mathbb{T}(M)$ and we will claim that they are sealed. Moreover, we will characterize all branches in terms of $x \in M$, $s \in {}^{<\kappa}2$, and certain ordinals of $M$. In order to do this we will use iterability and maximality of $M$. The next lemma will help us to exploit these properties.
\begin{lemma}\label{lem:directlimitofbranch}
Let $M$ be a countably iterable\footnote{That means, all countable substructures of $M$ are $(\omega_1, \omega_1+1)$-iterable.} premouse and let us assume that $\kappa$ is a cardinal in $V$ of uncountable cofinality. Let $b$ be a cofinal branch in the tree $\mathbb{T}(M)$ with unboundedly many active nodes and let $R_b$ be the direct limit of the active nodes on the branch, with $x_b$ being the limit of the values of $x$ in the active pairs along the branch.
Then $R_b$ is a well founded, countably iterable, sound premouse. Moreover, 
\[\rho_1(R_b) = \rho_\omega(R_b) = \len x_b = \kappa,\]
and $R_b \| \kappa = M \| \kappa$.
\end{lemma}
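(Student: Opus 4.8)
The plan is to analyze the direct limit $R_b$ via the system of active nodes and uncollapse maps along $b$, using the structural facts already established: each active node $\bar M_\alpha$ is $1$-sound (\autoref{lemma:soundness}), $\rho_1(\bar M_\alpha) = \rho$ equals the length of $\bar x_\alpha$ and the critical point of its uncollapse map $\pi_\alpha \colon \bar M_\alpha \to M \| \gamma_\alpha$, and $\pi_\alpha(\rho) = \kappa$. First I would observe that the tree order between two active nodes $\langle \bar M_\alpha, \bar x_\alpha\rangle \leq_{\mathbb{T}(M)} \langle \bar M_\beta, \bar x_\beta\rangle$ gives rise to a canonical fully elementary map $\sigma_{\alpha\beta}\colon \bar M_\alpha \to \bar M_\beta$, namely the composition of the uncollapse of $\bar M_\alpha$ into $\Hull^{\bar M_\beta}(\rho_1(\bar M_\alpha) \cup \{\bar x_\beta\})$ with the inclusion into $\bar M_\beta$; this map has critical point $\rho_1(\bar M_\alpha)$, sends $\bar x_\alpha$ to $\bar x_\beta \restriction \rho_1(\bar M_\alpha)$ extended appropriately, and sends $\rho_1(\bar M_\alpha)$ to $\rho_1(\bar M_\beta)$. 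Since the branch is cofinal with unboundedly many active nodes and $\cf(\kappa) > \omega$, the critical points $\rho_1(\bar M_\alpha)$ are cofinal in $\kappa$ (using that for club-many $\rho < \kappa$ the relevant $\bar x$ has length exactly $\rho$, as in the proof of \autoref{lem:atleast2kappabranches}), so the $\sigma_{\alpha\beta}$ form a directed system indexed cofinally in $\kappa$ whose direct limit $R_b$ has a limit map $\sigma_{\alpha\infty}\colon \bar M_\alpha \to R_b$ with $\sup_\alpha \crit(\sigma_{\alpha\infty}) = \kappa$, hence $\len x_b = \kappa$ and $R_b \| \kappa = \bar M_\alpha \| \rho_1(\bar M_\alpha) = M \| \kappa$ (the last equality because each $\pi_\alpha$ is the identity below $\rho_1(\bar M_\alpha) = \crit \pi_\alpha$ and fixes $M \| \kappa$ restricted to that level).

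Next I would establish well-foundedness: $R_b$ is the direct limit of a system of length $\cf(\kappa) \geq \omega_1$ of well-founded models, and a direct limit of well-founded models along a directed system of uncountable cofinality is well-founded (an illfounded branch in the limit would be captured by a countable subsequence, which would have to be realized in one of the $\bar M_\alpha$). Being well-founded and satisfying the premouse axioms (which are preserved under the elementary maps $\sigma_{\alpha\infty}$ and under direct limits), $R_b$ is a premouse. For countable iterability: every countable substructure $N$ of $R_b$ is, by elementarity and cofinality of the system, contained in $\sigma_{\alpha\infty}[\bar M_\alpha]$ for some $\alpha$, hence embeds into a countable substructure of $\bar M_\alpha$, which is $(\omega_1, \omega_1 + 1)$-iterable because $\bar M_\alpha$ is a transitive collapse of a hull of $M$ and hence countably iterable (the countable substructures of $\bar M_\alpha$ embed into countable substructures of $M$); iterability transfers along this embedding.

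For soundness and the projectum computation: each $\sigma_{\alpha\infty}$ is fully elementary with critical point $\rho_1(\bar M_\alpha)$, and since each $\bar M_\alpha$ is $1$-sound with $\rho_1(\bar M_\alpha) = \rho_\omega(\bar M_\alpha)$ being the common critical point, the standard direct-limit argument gives $\rho_1(R_b) = \rho_\omega(R_b) = \sup_\alpha \crit(\sigma_{\alpha\infty}) = \kappa$ and that $R_b$ is sound: the standard parameter of $R_b$ is the image of the (eventually stabilized) standard parameters of the $\bar M_\alpha$, and every element of $R_b$ lies in $\sigma_{\alpha\infty}[\bar M_\alpha] \subseteq \Hull^{R_b}(\kappa \cup \{p_{R_b}\})$ for suitable $\alpha$, using that each $\bar M_\alpha = \Hull^{\bar M_\alpha}(\rho_1(\bar M_\alpha) \cup \{p_{\bar M_\alpha}, \bar x_\alpha\})$ and that $\bar x_\alpha$ is coded into the initial segment $R_b \| \kappa$ together with ordinals $< \kappa$. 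One subtle point to verify is that $x_b$ is genuinely an element of $R_b$ (not merely a subset of $\kappa$ read off the limit): this follows because $\len \bar x_\alpha = \rho_1(\bar M_\alpha) < \kappa$, so $\bar x_\alpha \in \bar M_\alpha$ and $\sigma_{\alpha\infty}(\bar x_\alpha) \in R_b$ is an initial segment of $x_b$, and since $R_b$ computes $\kappa$-sequences of its members coherently with $M$ below $\kappa$, the full $x_b$ is assembled inside $R_b$ as $\sigma_{\alpha\infty}(\bar x_\alpha)$ for any single $\alpha$ with $\rho_1(\bar M_\alpha)$ large — more precisely $x_b = \sigma_{\alpha\infty}(\bar x_\alpha) \restriction \kappa$, but since $\crit(\sigma_{\alpha\infty}) = \rho_1(\bar M_\alpha)$ can be taken arbitrarily large we get $x_b \in R_b$ with $\len x_b = \kappa$.

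The main obstacle I anticipate is the bookkeeping needed to show that the tree order on active nodes really does induce a coherent directed system of fully elementary maps (clauses (1) and (3) of the definition of $\leq_{\mathbb{T}(M)}$ must be used to see that consecutive active nodes on $b$ are linked by hull embeddings with the right critical points and that these compose correctly), together with carefully checking that the projectum and standard parameter behave well in the limit — i.e., that no "collapsing" happens at $\kappa$ itself and that $R_b$ is genuinely $1$-sound rather than merely $1$-solid. Handling the case distinction for whether the branch contains a node of the form $\langle\emptyset,\emptyset\rangle$ cofinally (it cannot, by the hypothesis of unboundedly many active nodes) is a minor additional check.
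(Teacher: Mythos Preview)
Your proposal is essentially correct and follows the same overall strategy as the paper: set up the directed system of active nodes along $b$, take the direct limit, and verify well-foundedness (via uncountable cofinality), soundness and the projectum computation (via $1$-soundness of the active nodes and acceptability at the cardinal $\kappa$), and countable iterability (by factoring countable substructures through an active node).

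Two remarks. First, your justification that ``the critical points $\rho_1(\bar M_\alpha)$ are cofinal in $\kappa$'' by appeal to \autoref{lem:atleast2kappabranches} is misplaced: that lemma only treats the specific branches $b_x$ built from $x\in M$, not an arbitrary branch $b$. The correct argument---which you in fact anticipate in your final paragraph---uses clause~(3) of the tree order: if $\alpha$ is a level at which the active pair is genuinely new, then for every $\beta<\alpha$ clause~(3) applied to the node at level $\beta$ forces $\rho_1(\bar M_\alpha)>\beta$, while $\rho_1(\bar M_\alpha)=\len \bar x_\alpha\le\len s_\alpha=\alpha$; hence $\rho_1(\bar M_\alpha)=\alpha$. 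The paper records this as ``Since $M_\alpha$ is an active node, $\alpha=\rho_1(M_\alpha)$'' and additionally proves that the index set $I$ of such levels is closed.

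Second, your countable iterability argument is slightly simpler than the paper's and is fine: the paper works with an ambient $N\prec H(\theta)$, uses closedness of $I$ to get $\delta=\sup(N\cap\kappa)\in I$, and then builds an elementary $j\colon R_b\cap N\to M_\delta$. You instead observe directly that any countable elementary substructure of $R_b$ lies inside $\sigma_{\alpha\infty}[\bar M_\alpha]$ for some $\alpha$ (since $\cf(\kappa)>\omega$), and hence its collapse embeds into the countably iterable $\bar M_\alpha$. Both routes reach the same conclusion.
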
  
\begin{proof}
    For $\alpha < \kappa$, write $\langle M_\alpha, x_\alpha, s_\alpha\rangle$ for the
    element of $b$ on level $\alpha$ of the tree. 
    By our assumption on $b$, the sequence $\langle M_\alpha \mid \alpha < \kappa \rangle$ is not eventually constant. 
    Let $I \subseteq \kappa$ be the set of all ordinals $\alpha$ 
    such that $\forall \beta < \alpha,\, M_\alpha \neq M_\beta$. 
    
    \begin{claim}
      Let $\alpha$ be a limit point of $I$. Then $\alpha \in I$. 
    \end{claim}
     \begin{proof}
    For every $\beta < \gamma$ in $I$, the inverse of the 
    collapse map $\pi_{\beta, \gamma} \colon M_\beta \to M_\gamma$ 
	is an elementary embedding, and those embeddings commute.
	Thus, the system $\langle M_\beta, \pi_{\beta, \gamma} \mid \beta, \gamma \in I \cap \alpha, \beta \leq \gamma\rangle$ is directed. Let $\tilde{M}$ be
	the direct limit of this system, and let $\tilde{x}$ be the corresponding
	limit of $x_\beta$, $\beta \in I \cap \alpha$.
	
	Let $\alpha' \in I \setminus \alpha$. Let us verify that 
	\[\tilde{M} \cong \Hull^{M_{\alpha'}}(\alpha \cup \{x_{\alpha'}\}).\]
	For every $z \in \tilde{M}$, there is $\beta \in I \cap \alpha$ and
	$\bar{z} \in M_\beta$ such that $z = \pi_{\beta, \alpha}(\bar{z})$. 
	Let $\iota \colon \tilde{M} \to M_{\alpha'}$ be defined by $\iota(z) = \pi_{\beta, \alpha'}(\bar{z})$. 
	It is routine to verify that $\iota(z)$ is independent of the choice
	of $\beta$ and that $\iota$ is an elementary map. Moreover, since  
	$\langle M_{\beta}, x_\beta\rangle$ is an active pair, $\bar{z}$ is 
	definable in $M_{\beta}$ using ordinal parameters below $\beta$ and 
	the parameter $x_\beta$. By elementarity of $\pi_{\beta, \alpha'}$, 
	$\iota(z) = \pi_{\beta, \alpha'}(\bar{z})$ is definable in $M_{\alpha'}$ 
	using ordinal parameters below $\beta$ and $x_{\alpha'}$ (since $\crit \pi_{\beta, \alpha'} \geq \beta$). 
	We conclude that the image of $\iota$ is
	$\Hull^{M_{\alpha'}}(\alpha \cup \{x_{\alpha'}\})$, as desired.
	
	In particular, $\tilde{M}$ is well founded, and it makes sense to 
	compute the length of $\tilde{x}$. Clearly, $\len \tilde{x} \geq \alpha$.
	For the other direction, if $\len \tilde{x} > \alpha$, then there
	is some ordinal $\eta \in M_\beta$, for $\beta < \alpha$, such that 
	$\pi_{\beta, \alpha}(\eta) = \alpha$ and $\eta < \len x_\beta = \beta$.
	But $\iota \circ \pi_{\beta, \alpha} = \pi_{\beta, \alpha'}$, and thus
	$\crit \pi_{\beta, \alpha} \geq \crit (\iota \circ \pi_{\beta, \alpha}) \geq \beta$,
	which contradicts our choice of $\eta$. So $\tilde{M} = \trcl \Hull^{M_{\alpha'}}(\alpha \cup \{x_{\alpha'}\})$ and $\tilde{x}$ is the image of $x_{\alpha'}$ under the collapse.
	
	Finally, there is a $\gamma < M \cap \Ord$ and some $y \in M$ such that $M_{\alpha'} \cong \Hull^{M \| \gamma}(\alpha' \cup \{y\})$ since $M_{\alpha'}$ is an active node. 
	Then, as the Skolem hull of a Skolem hull is a Skolem hull,
	\[\tilde{M} = \trcl \Hull^{M \| \gamma}(\alpha \cup \{y\})\] and $\tilde{x}$ is the image of $y$ under the transitive collapse. 
	This finishes the verification that $\langle \tilde{M}, \tilde{x}\rangle$ is an 
	active pair, and therefore $\alpha \in I$, $\tilde{M} = M_\alpha$ and $\tilde{x} = x_\alpha$.
     \end{proof}

	Let us consider $R_b$. This model is well founded as a limit of uncountable 
	length of well founded models, using the directed system 
	\[\langle M_\alpha, \pi_{\alpha, \beta} \mid \alpha, \beta \in I,\, \alpha \leq \beta\rangle.\] 
	Let $\pi_{\alpha, \kappa}$ denote the corresponding limit maps. 
	The length of $x_b$ is $\kappa$, by the same argument as above. 
	For every $z \in R_b$, there is $\beta \in I$ and $\bar{z}
	\in M_\beta$ such that $z = \pi_{\beta, \kappa}(\bar{z})$. In particular,
	$z \in \Hull^{R_b}(\beta \cup \{x_b\})$. 
	Thus, $R_b = \Hull^{R_b}(\kappa \cup \{x_b\})$. 
	
	\begin{claim}
	  $\rho_1(R_b) = \kappa$ and $R_b$ is 1-sound. In fact, $\rho_\omega(R_b) = \kappa$ and $R_b$ is sound.
	\end{claim}
	\begin{proof}
	  Pick any $\alpha \in I$. 
	Since $M_{\alpha}$ is an active node, 
	$\alpha = \rho_1(M_{\alpha}) = \crit \pi_{\alpha, \kappa}$. 
	Let us verify that $\pi_{\alpha, \kappa}(\alpha) = \kappa$. Since $\kappa$ is a
	cardinal in $V$, by acceptability, $\rho_1(R_b) \geq \kappa$ and in fact $\rho_\omega(R_b) \geq \kappa$. For each $\alpha \in I$, since $M_{\alpha}$
	is 1-sound, there is a parameter $p_\alpha$ such that 
	$\Hull_1^{M_{\alpha}}(\alpha \cup \{p_{\alpha}\}) = M_{\alpha}$. 
	The minimal such parameter is definable. 
	For every $\beta \in I \setminus \alpha$, taking 
	$p_\beta = \pi_{\alpha,\beta}(p_\alpha)$, 
	\[M_\beta = \Hull_1^{M_\beta}(\beta \cup \{p_\beta\}).\]
	Let $p_\kappa = \pi_{\alpha, \kappa}(p_\alpha)$. Let us verify that
	$\Hull_1^{R_b}(\kappa \cup \{p_\kappa\}) = R_b$. Indeed, for every $z \in R_b$
	there is $\beta \in I$ such that $z = \pi_{\beta, \kappa}(\bar{z})$ for some $\bar{z} \in M_\beta$. 
	Without loss of generality, $\beta > \alpha$, and thus 
	$\bar{z} \in \Hull_1^{M_\beta}(\beta \cup \{p_\beta\})$. In particular, 
	$z \in \Hull^{R_b}_1(\beta \cup \{p_\kappa\})$. We conclude that
	\[R_b = \Hull_1^{R_b}(\kappa \cup \{p_\kappa\}),\]
	so $\rho_1(R_b) = \kappa$ and $R_b$ is 1-sound. As $\rho_\omega(R_b) \geq \kappa$, this easily implies $\rho_\omega(R_b) = \kappa$ and hence $R_b$ is sound.
	\end{proof}
	
	\begin{claim}
	  $R_b$ is countably iterable.
	\end{claim}
	\begin{proof}
	Let $\theta$ be a sufficiently
 	large regular cardinal and let $N \prec H(\theta)$ be a countable elementary
 	substructure that contains all relevant objects (and in particular, $I$). 
 	Then $\delta = \sup(N \cap \kappa) \in \kappa$ (since $\cf(\kappa) > \omega$). 
 	We argue that
 	$\tilde{N} = R_b \cap N$ is $(\omega_1, \omega_1+1)$-iterable. 
 	This suffices by the definition countable iterability.
	Since $I$ is closed, $\delta \in I$. So 
	$M_\delta = \trcl \Hull^M(\delta \cup \{y_\delta\})$, 
	for $y_\delta \in M$. The model $M_\delta$ is countably iterable 
	as an elementary substructure of the countably iterable premouse $M$. 
	Let $j \colon R_b \cap N \to M_\delta$ be the following embedding. 
	Let $a \in R_b \cap N$. Then, by the definition of the direct limit, there is 
	some model $X = M_\alpha$ for $\alpha \in N$ and some $\bar{a} \in X$ such that 
	$a = \pi_{\alpha,\kappa}(\bar{a})$. Let $j(a) = \pi_{\alpha, \delta}(\bar{a})$. 

	Let us verify that $j$ is well defined and fully elementary. 
	So, let $a \in R_b \cap N$ and let $\bar{a}, \bar{a}'$ be two elements 
	such that $\pi_{\alpha, \kappa}(\bar{a}) = \pi_{\beta,\kappa}(\bar{a}') = a$ 
	for some ordinals $\alpha, \beta$. 
	Then, $\alpha, \beta < \sup(N \cap \kappa) = \delta$ 
	and in particular $\pi_{\alpha,\delta}(\bar{a}) = \pi_{\beta,\delta}(\bar{a}')$. 
	Thus, $j$ is well defined. 
	
	Let us show that $j$ is elementary. So, let $a \in R_b \cap N$ and let us assume that $\varphi(a)$ holds in $\tilde{N} = R_b \cap N$. 
	Then $\varphi(a)$ holds in $R_b$ (by the Tarski-Vaught criterion, 
	$\tilde{N}$ is an elementary substructure of $R_b$). 
	Therefore, if we let $\bar{a} \in M_\alpha$ be such that $\pi_{\alpha,\kappa}(\bar{a}) = a$, $\varphi(\bar{a})$ holds in $M_\alpha$, using the elementarity of $\pi_{\alpha,\kappa}$. 
	Finally, $\varphi(j(a)) = \varphi(\pi_{\alpha,\delta}(\bar{a}))$ holds in 
	$M_\delta$. 

	We conclude that there is an elementary embedding from $\tilde{N}$ to $M_\delta$ 
	and therefore $\tilde{N}$ is $(\omega_1,\omega_1+1)$-iterable. 
	\end{proof}
	This finishes the proof of Lemma \ref{lem:directlimitofbranch}.
\end{proof}

\begin{lemma}\label{lem:branchfromx}
Let $\kappa$ be a regular uncountable cardinal in $V$. Let $M$ be a sound, countably iterable premouse with largest cardinal $\kappa$.
Let $b, b'$ be two cofinal branches of $\mathbb{T}(M)$ with unboundedly many active nodes and let $\langle R_b, x_b, s_b \rangle$ and $\langle R_{b'}, x_{b'}, s_{b'} \rangle$ be the direct limits along $b$ and $b'$. Suppose that $x_b = x_{b'}$ and $s_b = s_{b'}$. Then $R_{b}$ is an initial segment of $R_{b'}$ or vice versa. 
\end{lemma}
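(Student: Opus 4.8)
The plan is to argue by comparison. By Lemma~\ref{lem:directlimitofbranch}, both $R_b$ and $R_{b'}$ are sound, countably iterable premice with largest cardinal $\kappa$, with $\rho_\omega(R_b)=\rho_\omega(R_{b'})=\kappa$ and $R_b\|\kappa = M\|\kappa = R_{b'}\|\kappa$; inspecting that proof also yields $R_b = \Hull^{R_b}(\kappa\cup\{x_b\})$ and $R_{b'}=\Hull^{R_{b'}}(\kappa\cup\{x_{b'}\})$. So, writing $x:=x_b=x_{b'}$, the statement reduces to the following: two $\omega$-sound, countably iterable premice that project to $\kappa$, agree strictly below $\kappa$, and are each generated by $\kappa$ together with the \emph{same} $x\in{}^\kappa 2$ are $\trianglelefteq$-comparable. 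The extra hypothesis $s_b=s_{b'}$ is used only through Lemma~\ref{lem:directlimitofbranch} to match up the active nodes along $b$ and $b'$ and plays no further role.

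First I would coiterate $R_b$ against $R_{b'}$. Since these are only assumed countably iterable, I pass to countable, fully elementary substructures containing all relevant parameters (including $x$), coiterate those — termination is the usual argument — and transfer the outcome back by elementarity. This produces iteration trees $\mathcal{T}$ on $R_b$ and $\mathcal{U}$ on $R_{b'}$ whose last models $P$ and $Q$ satisfy $P\trianglelefteq Q$ or $Q\trianglelefteq P$, together with main-branch embeddings $i_{\mathcal{T}}\colon R_b\to P$ and $i_{\mathcal{U}}\colon R_{b'}\to Q$ that are a priori only partial, in case of drops. Next I would verify that every extender applied in $\mathcal{T}$ or $\mathcal{U}$ has critical point $\geq\kappa$: inductively both sides agree below $\kappa$, and $\kappa$ is the largest cardinal of every model occurring, so by acceptability all bounded subsets of $\kappa$ already appear before level $\kappa$; using coherence together with the regularity of $\kappa$ in $V$, the least disagreement at each stage is at an index $\geq\kappa$ with an extender of critical point $\geq\kappa$. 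Hence $P\|\kappa = R_b\|\kappa = R_{b'}\|\kappa = Q\|\kappa$, and whenever $i_{\mathcal{T}}$ or $i_{\mathcal{U}}$ has critical point strictly above $\kappa$ it fixes $\kappa$ pointwise and fixes $x$, which has length $\kappa$.

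Finally — the crux — I would invoke the Dodd--Jensen lemma for countably iterable premice, together with the $\omega$-soundness and $\rho_\omega=\kappa$ of $R_b$ and $R_{b'}$ and the fact that each is generated by $\kappa\cup\{x\}$, to conclude that $\mathcal{T}$ and $\mathcal{U}$ are trivial, so that $P=R_b$, $Q=R_{b'}$, and therefore $R_b\trianglelefteq R_{b'}$ or $R_{b'}\trianglelefteq R_b$ as desired. If, say, the main branch of $\mathcal{U}$ were nontrivial, Dodd--Jensen forces it not to drop and $i_{\mathcal{U}}$ to be a least elementary embedding of $R_{b'}$ into an iterate; but a nontrivial such embedding with critical point $\geq\kappa=\rho_\omega(R_{b'})$ contradicts the minimality clause of Dodd--Jensen in the presence of $\omega$-soundness, since the standard parameter witnessing soundness would then be moved. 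Here the hypothesis $x_b=x_{b'}$ is genuinely needed: two $\omega$-sound premice agreeing only below $\kappa$ need not be comparable (they could encode incompatible new subsets of $\kappa$), and it is precisely the common generating object $x$ that, via the hull representations, pins down the two structures and forces the comparison to collapse. I expect this last step to be the main obstacle: one must track the degrees of the ultrapowers, rule out truncations on both sides, and deal carefully with the borderline case $\crit=\kappa$ — where an extender would be applied to the very top of the common segment $M\|\kappa$ — so that the soundness-plus-Dodd--Jensen argument goes through in the Jensen-indexed setting.
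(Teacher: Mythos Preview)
Your approach is correct in outline but takes a genuinely different route from the paper's. The paper never compares $R_b$ and $R_{b'}$ directly. Instead it works at the level of active nodes: for club-many $\rho$, the active nodes $N_\rho\in b$ and $N'_\rho\in b'$ are, by the Condensation Lemma applied to the uncollapse maps into $M\|\gamma$, both initial segments of $M$, hence $\trianglelefteq$-comparable. On the stationary set where (say) $N_\rho\triangleleft N'_\rho$, one writes $N_\rho=t^{N'_\rho}(\vec\alpha_\rho,x\restriction\rho)$ for some Skolem term, applies Fodor to stabilise $t$ and $\vec\alpha$, and reads off $R_b=t^{R_{b'}}(\vec\alpha,x)\triangleleft R_{b'}$ by elementarity of the limit maps. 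Here the hypothesis $x_b=x_{b'}$ is genuinely used: it is the common parameter that lets one express $N_\rho$ inside $N'_\rho$ uniformly and then push the definition to the direct limit.

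Your route---coiterate the direct limits---amounts to reproving that the stack of mice over $M\|\kappa$ is linearly ordered, i.e.\ essentially \cite[Lemmas 2.9/3.1]{JSSS09}: two sound, countably iterable premice projecting to $\kappa$ and agreeing below $\kappa$ are $\trianglelefteq$-comparable. This is a valid argument, and in fact it shows \emph{more} than the lemma asserts, since it does not use $x_b=x_{b'}$ at all; your claim that this hypothesis is ``genuinely needed'' for the comparison is mistaken. What buys the comparability is not the common generator $x$ but rather soundness plus the preservation of cores under iteration: if the coiteration of the countable hulls were nontrivial on, say, the $\bar R_b$-side with no drop, the last model would have $\bar R_b$ as its $\bar\kappa$-core yet be a sound initial segment of the other side, forcing the iteration map to be the identity. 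Two caveats on your sketch: first, your justification that all critical points are $\geq\kappa$ (``regularity of $\kappa$ in $V$'') does not survive the passage to countable hulls, and in any case an extender indexed $\geq\kappa$ can have critical point $<\kappa$; the standard argument does not need this claim. Second, the phrase ``minimality clause of Dodd--Jensen'' is not quite the right hook---what you want is the preservation of the standard parameter/core under non-dropping iteration maps. With those adjustments your argument goes through; the paper's Fodor-based argument is simply more elementary, trading the comparison machinery for a single application of condensation inside $M$.
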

\begin{proof}
Let $I$ be the collection of levels with active nodes in $b$ and let $I'$ be the corresponding set for $b'$. Recall that $I, I'$ are both clubs and in particular, $D = I \cap I'$ is a club. Let $\rho \in D$ and let $\langle N_\rho, x \restriction \rho, s \restriction \rho\rangle \in b$, $\langle N'_\rho, x \restriction \rho, s \restriction \rho\rangle \in b'$, where $x = x_b = x_{b'}$ and $s = s_b = s_{b'}$. Let us write $N = N_\rho$ and $N' = N'_{\rho}$.

Let $\gamma < M \cap \Ord$ and let $\pi \colon N \to M \| \gamma$ be the uncollapse map. Recall that $M \| \gamma$ is sound, $\pi$ is fully elementary, $\crit \pi = \rho = \rho_{\omega}(N)$, and $\pi(\rho) = \kappa = \rho_\omega(M \| \gamma)$. Thus, by the Condensation Lemma, \cite[Lemma 1.4]{JSSS09} (see also \cite[Theorem 5.1]{St10}), $N$ is an initial segment of $M$. By applying the same argument for $N'$, we conclude that both $N$ and $N'$ are initial segments of the same model $M$. 

Let $S_0$ be the set of all $\rho \in D$ such that $N_\rho \triangleleft N_\rho'$, let $S_1$ be the set of $\rho \in D$ such that $N_\rho' \triangleleft N_\rho$ and $S_2$ be $D \setminus (S_0 \cup S_1)$. 

Let us assume that $S_0$ is stationary. Since $N_\rho \in N'_{\rho} = \Hull^{N'_{\rho}}(\rho \cup \{x \restriction \rho\})$ for $\rho \in S_0$, we can fix a Skolem term $t$ and a finite sequence $\vec{\alpha}_\rho$ of ordinals below $\rho$ such that $N_\rho = t^{N'_{\rho}}(\vec\alpha_\rho, x \restriction \rho)$. Using Fodor's lemma, we can shrink $S_0$ to $S_0'$ and stabilize $\vec\alpha = \vec\alpha_\rho$. Thus, $t^{R_{b'}}(\vec\alpha, x)$ is an initial segment of $R_{b'}$. 
Write $N_\kappa = t^{R_{b'}}(\vec\alpha, x)$. Then, using elementarity, the model $N_\kappa$ satisfies $\Hull^{N_\kappa}(\kappa \cup \{x\}) = N_\kappa$ and thus it is exactly the direct limit of the system corresponding to the branch $b$. Therefore, $N_\kappa = R_b$. 

For the case where $S_1$ is stationary we can use a symmetric argument and if $S_2$ is stationary (or even unbounded) it easily follows that $R_b = R_{b'}$. 
\end{proof}

In case that $R_b \in M$, we can conclude that it is one of the branches which were constructed in Lemma \ref{lem:atleast2kappabranches}, up to a different choice of the $s$ part and the ordinal $\gamma$.

When $\kappa$ is an inaccessible cardinal, we are done, as $\mathbb{T}(M)$ is already a $\kappa$-tree. But for successor cardinals $\kappa$ we need to show that the size of each level is $<\kappa$. 




\begin{lemma}
Let $\kappa$ be a regular uncountable cardinal in $V$. If $M$ is a premouse and $M \models$ ``$\kappa$ is a successor cardinal'' then $\mathbb{T}(M)$ is a $\kappa$-tree.
\end{lemma}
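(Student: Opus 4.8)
The plan is to verify that $\mathbb{T}(M)$ has height $\kappa$ (this is already contained in Lemma~\ref{lem:atleast2kappabranches}) and, writing $\lambda$ for the ordinal with $(\lambda^{+})^{M}=\kappa$, that every level of $\mathbb{T}(M)$ has size $<\kappa$; the latter is the real content. Fixing $\rho<\kappa$, a node at level $\rho$ is a triple $\langle\bar M,\bar x,s\rangle$ with $\len s=\rho$ and $\langle\bar M,\bar x\rangle$ either $\langle\emptyset,\emptyset\rangle$ or an active pair with $\rho_1(\bar M)=\len\bar x\leq\rho$. There are only $|[\rho]^{<\omega}|=|\rho|+\aleph_0<\kappa$ almost-zero sequences of length $\rho$, and for a fixed active node $\bar M$ only $|\bar M|\leq|\rho_1(\bar M)|+\aleph_0<\kappa$ admissible $\bar x\in\bar M$ (using $1$-soundness of $\bar M$ from Lemma~\ref{lemma:soundness}), so by regularity of $\kappa$ in $V$ it will suffice to bound the number of active nodes $\bar M$ with $\rho_1(\bar M)\leq\rho$ by something $<\kappa$.

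The key step will be to identify each such $\bar M$ with a proper initial segment of $M$ whose ordinal height is below a single ordinal $\delta^{*}<\kappa$ depending only on $\rho$. So fix an active node $\bar M$ and its uncollapse map $\pi\colon\bar M\to M\|\gamma$, so that $\crit\pi=\rho'$ where $\rho':=\rho_1(\bar M)$ and $\pi(\rho')=\kappa$. Since $M\|\gamma$ and $M$ have the same cardinals below $\gamma$ and $\gamma>\kappa$, we get $M\|\gamma\models$``$\kappa=\lambda^{+}$ is the largest cardinal''; hence $\lambda$ is definable over $M\|\gamma$, lies in $\ran\pi$, and — being below $\pi(\rho')=\kappa$ — has preimage below $\crit\pi=\rho'$ and is therefore fixed. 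Thus $\lambda<\rho'$ and, by full elementarity of $\pi$, $\bar M\models$``$\rho'=\lambda^{+}$ is the largest cardinal'' (and $\rho_\omega(\bar M)=\rho_1(\bar M)=\rho'$, since $\rho_\omega(M\|\gamma)=\rho_1(M\|\gamma)=\kappa$). Consequently $\rho'\in(\lambda,\kappa)$ is a cardinal neither of $M$ nor of $M\|\gamma$, so it cannot be the critical point of an extender on the $M\|\gamma$-sequence. This is exactly what is needed to discard the exceptional conclusions of the Condensation Lemma (\cite[Lemma~1.4]{JSSS09}), whose hypotheses — $\crit\pi=\rho'=\rho_\omega(\bar M)$, soundness of $\bar M$, soundness of the proper segment $M\|\gamma$ — are in place just as in the use of condensation in Lemma~\ref{lem:branchfromx}. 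One concludes $\bar M\triangleleft M\|\gamma\triangleleft M$, say $\bar M=M\|\bar\gamma$.

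Next I would bound $\bar\gamma$. Because $M\models$``$\kappa=\lambda^{+}$'', every ordinal $\xi\in(\lambda,\kappa)$ carries in $M$ a surjection from $\lambda$ onto it, and by acceptability of the extender sequence of $M$ such a surjection (coded as a subset of $\lambda$) already belongs to $M\|\kappa$ and hence appears at some level $<\kappa$. Since $\kappa$ is regular in $V$ and $(\lambda,\rho]$ contains fewer than $\kappa$ ordinals, I can fix $\delta^{*}<\kappa$ such that $M\|\delta^{*}$ contains, for every $\xi\in(\lambda,\rho]$, a surjection $\lambda\to\xi$. Now if $\bar M=M\|\bar\gamma$ is an active node with $\rho':=\rho_1(\bar M)\leq\rho$, then $\lambda<\rho'\leq\rho$ by the previous paragraph and $\bar M\models$``$\rho'$ is a cardinal'', so no surjection $\lambda\to\rho'$ lies in $M\|\bar\gamma$; hence $\bar\gamma\leq\delta^{*}$. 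Thus there are at most $|\delta^{*}|+\aleph_0<\kappa$ active nodes $\bar M$ with $\rho_1(\bar M)\leq\rho$, and combining this with the first paragraph, the $\rho$-th level of $\mathbb{T}(M)$ has size $<\kappa$, as desired.

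I expect the condensation step to be the main obstacle: in its general form the Condensation Lemma only places $\bar M$ inside an iterate of $M\|\gamma$, and one must argue the exceptional branches of the dichotomy cannot occur. Here this is clean precisely because $\rho_1(\bar M)$ is pinned strictly between $\lambda$ and $(\lambda^{+})^{M}=\kappa$ and so fails to be an $M$-cardinal — this is the one point that genuinely uses inner model theory (together with the iterability of $M$ implicit in the standing conventions of the paper). The remaining ingredients — the acceptability fact that no new subsets of $\lambda$ appear past $(\lambda^{+})^{M}$, and the closure of a regular $\kappa$ under unions of length $<\kappa$ — are routine bookkeeping.
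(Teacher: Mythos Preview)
Your argument is correct and follows essentially the same route as the paper's: both invoke the Condensation Lemma to realize each active node $\bar M$ as a proper initial segment $M\|\bar\gamma$ of $M$, and then bound $\bar\gamma$ by a single ordinal $\delta^*$ (the paper's $\zeta_\rho$) below which $M$ already sees that $|\rho|\leq\lambda$. You are more explicit than the paper in deriving $\lambda<\rho_1(\bar M)$, in discussing why the exceptional clause of condensation does not occur, and in separately counting the $s$- and $\bar x$-components, but the substance of the argument is the same.
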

\begin{proof}
 Say $M \models \kappa = \lambda^+$. We analyze the size of each level $\rho<\kappa$ of $\mathbb{T}(M)$ and focus on the interesting case $\lambda < \rho$.
 For every $\rho < \kappa$ fix some $\zeta_\rho < \kappa$ such that $M \| \zeta_\rho \models |\rho| \leq \lambda$. 
 
 Every node at level $\rho$ of $\mathbb{T}(M)$ is of the form $\langle M', x', s \rangle$ where $M'$ is an active node with $\rho_1(M') \leq \rho$, $x' \in M'$, and a binary sequence $s$ given by an element of $\rho^{{<}\omega}$. Again, we focus on the interesting case, i.e., we suppose $\lambda < \rho_1(M')$. As $M'$ is an active node, \[ M' = \trcl \Hull^{M \| \gamma}(\rho_1(M') \cup \{x\}), \] for some $\gamma < M \cap \Ord$ and some $x \in M$ that collapses to $x'$. If $\pi \colon M' \rightarrow M \| \gamma$ denotes the uncollapse map, then $\crit \pi = \rho_1(M')$ and $\pi(\rho_1(M')) = \rho_1(M\|\gamma) = \rho_\omega(M\|\gamma) = \kappa$. By the Condensation Lemma, \cite[Lemma 1.4]{JSSS09}, $M'$ is an initial segment of $M \| \gamma$. In fact, the height of this initial segment is bounded by $\zeta_\rho < \kappa$ since if $M'$ would contain $M \| \zeta_\rho$ then $\rho \geq \rho_1(M') > \lambda$ would not be cardinals in $M'$. Therefore, there are $\leq |\zeta_\rho| < \kappa$ many possible nodes of $\mathbb{T}(M)$ at level $\rho$.
\end{proof}



\section{Applications}\label{section: applications}

We are now ready for the proof of Theorem \ref{theorem: sealed trees in K}.
\begin{proof}[Proof of Theorem \ref{theorem: sealed trees in K}]
  Using the anti-large cardinal hypothesis that there is no proper class inner model with a Woodin cardinal, we can construct the core model $K$ as in \cite{JS13} (building on \cite{St96}). Let $\kappa \geq \aleph_2$ be a regular cardinal and let us consider the tree $\mathbb{T} = \mathbb{T}(K\|\kappa^+)$. Here we are referring to $\kappa^+$ as computed in $K$. Let $b$ be a branch through $\mathbb{T}$ in some generic extension $V[G]$. By the forcing absoluteness of $K$, $K^{V[G]} = K^{V}$ and in particular, $K^V$ is still universal in $V[G]$, as in \cite{St96}. 
  
Let $R_b$ be the direct limit of the active nodes in $b$. By Lemma \ref{lem:directlimitofbranch}, $\rho_\omega(R_b) = \kappa$, $R_b$ is sound, countably iterable, and $R_b \| \kappa = K \| \kappa$. 
By Schindler's definition of $K\|\mu$ for $\mu \geq \omega_2^V$ (see \cite[Lemma 3.5]{GSS06}), $R_b \lhd K$. 
Since we can recover $b$ (modulo some choice of $s$) from $R_b$ and $x_b \in R_b$ as in the proof of Lemma \ref{lem:branchfromx}, this implies that the tree $\mathbb{T}(K\|\kappa^+)$ has exactly $(2^\kappa)^K = (\kappa^+)^K$ many branches in $V[G]$. If $\kappa$ is weakly compact, the Covering Lemma \cite[Theorem 3.1]{SchSt99} implies that $(\kappa^+)^K = (\kappa^+)^V$.
\end{proof}

Let us remark that in a model of $\mathrm{PFA}$, every $\omega_1$-tree has at most $\aleph_1$ many branches and is (strongly) sealed, in the sense that it is specialized. So, in this model there are no Kurepa trees. Indeed, it is unknown how to seal a given Kurepa tree without collapsing cardinals. 

\begin{question}
Is it possible to obtain a model with a strongly sealed $\kappa$-Kurepa tree using forcing? 
\end{question}

We now turn to the proof of Theorem \ref{theorem: psp and AD_R}. We first recall the definition of domestic premouse from \cite{ANS01}.

\begin{definition}\label{def:domesticmouse}
  A premouse $M$ is called \emph{domestic} if there is no initial segment $N \unlhd M$ with non-empty top extender $F^N$ such that $\crit(F^N)$ is a limit of Woodin cardinals in $N$ and $\crit(F^N)$ is a limit of strong cardinals in $N\|\crit(F^N)$.
\end{definition}

Moreover, we will use Jensen's notion of a stack of mice from \cite{JSSS09}.

\begin{definition}
  Let $\cN$ be a premouse such that $\cN \cap \Ord$ is an uncountable regular cardinal. Then, if it exists, $\mathcal{S}(\cN)$ denotes the unique premouse $\mathcal{S}$ such that $\cM \unlhd \mathcal{S}$ iff there is a sound countably iterable premouse $\cM^\sta \unrhd \cN$ with $\rho_\omega(\cM^\sta) = \cN \cap \Ord$ such that $\cM \unlhd \cM^\sta$. 
\end{definition}

In the context that $K^c$ as in \cite{ANS01} exists, there is no premouse with a superstrong extender, and $\kappa$ is a regular uncountable cardinal $\mathcal{S}(K^c \| \kappa)$ as defined above exists by \cite[Lemma 3.1]{JSSS09}.

\begin{proof}[Proof of Theorem \ref{theorem: psp and AD_R}]
  Assume that there is no non-domestic premouse and let $\kappa$ be a weakly compact cardinal. Then $K^c$ as in \cite{ANS01} exists and there is no premouse with a superstrong extender. So we can consider the tree $\mathbb{T}(\mathcal{S})$ for $\mathcal{S} = \mathcal{S}(K^c \| \kappa)$ the stack of mice on $K^c \| \kappa$. We claim that $\mathbb{T}(\mathcal{S})$ is a sealed tree with exactly $\kappa^+$ many branches.

  \begin{claim}
    In $V$, $\mathbb{T}(\mathcal{S})$ has exactly $(\kappa^+)^V$ many branches.
  \end{claim}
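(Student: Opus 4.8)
The plan is to prove the two inequalities of the claim separately, the single crucial input being that $\mathcal{S}\cap\Ord = (\kappa^+)^V$ --- which is exactly the point at which the weak compactness of $\kappa$ is used. The inequality $\mathcal{S}\cap\Ord \le (\kappa^+)^V$ is automatic, since every sound, countably iterable premouse $\cM^\sta \unrhd K^c\|\kappa$ with $\rho_\omega(\cM^\sta) = \kappa$ is of the form $\Hull^{\cM^\sta}(\kappa\cup\{p\})$, hence has $V$-cardinality $\kappa$ and ordinal height $<(\kappa^+)^V$, and $\mathcal{S}$ is the union of all such $\cM^\sta$. For the reverse inequality I would run a covering argument for the stack of mice over $K^c\|\kappa$: this is the analogue of the step $(\kappa^+)^K = (\kappa^+)^V$ used in the proof of Theorem~\ref{theorem: sealed trees in K}, but since there may now be Woodin cardinals the classical core model $K$ is unavailable and one has to argue directly with $\mathcal{S}(K^c\|\kappa)$, along the lines of \cite{SchSt99, GSS06, JSSS09}; weak compactness of $\kappa$ is what guarantees that for every $\gamma < (\kappa^+)^V$ there is a sound, countably iterable $\cM^\sta \unrhd K^c\|\kappa$ with $\rho_\omega(\cM^\sta)=\kappa$ and $\cM^\sta\cap\Ord > \gamma$. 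I expect this covering argument to be the main obstacle, and the place where the hypothesis of weak compactness (rather than mere regularity) is essential. I also note that, since $\kappa$ is inaccessible and $\mathcal{S}\|\kappa = K^c\|\kappa$, the arguments of Section~\ref{section: abstract construction} show that $\mathbb{T}(\mathcal{S})$ is a normal tree of height $\kappa$ with levels of size $<\kappa$ (using that active nodes are initial segments of $\mathcal{S}$ by the Condensation Lemma \cite[Lemma 1.4]{JSSS09}).

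Granting $\mathcal{S}\cap\Ord = (\kappa^+)^V$, the lower bound is immediate: since $\mathcal{S}\cap\Ord > \kappa$ we cannot have $\Hull^{\mathcal{S}}(\kappa\cup\{p\}) = \mathcal{S}$ for any $p\in\mathcal{S}$ (that would force $|\mathcal{S}|=\kappa$), so Lemma~\ref{lem:atleast2kappabranches} applies and $\mathbb{T}(\mathcal{S})$ has at least $(2^\kappa)^{\mathcal{S}}$ branches; and since $\kappa$ is the largest cardinal of $\mathcal{S}$, every ordinal of $\mathcal{S}$ above $\kappa$ is the order type of a well-ordering of $\kappa$ inside $\mathcal{S}$, whence $(2^\kappa)^{\mathcal{S}} = \mathcal{S}\cap\Ord = (\kappa^+)^V$.

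For the upper bound I would fix an arbitrary branch $b$ through $\mathbb{T}(\mathcal{S})$ in $V$ and distinguish two cases. If $b$ has only boundedly many active nodes, its first two coordinates are eventually constant, equal to an active pair $\langle M_0, x_0\rangle$ (or to $\langle\emptyset,\emptyset\rangle$, an even easier subcase), so $b$ is determined by $M_0$, $x_0$, and the almost zero sequence $s_b = \bigcup_{\alpha<\kappa}s_\alpha \in {}^\kappa 2$; by condensation $M_0$ is one of at most $(\kappa^+)^V$ initial segments of $\mathcal{S}$, there are $<\kappa$ choices of $x_0\in M_0$, and only $\kappa$ almost zero sequences in ${}^\kappa 2$, so this case yields at most $(\kappa^+)^V$ branches. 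If $b$ has unboundedly many active nodes, let $R_b$ be the direct limit of the active nodes along $b$, with induced limits $x_b, s_b$. By Lemma~\ref{lem:directlimitofbranch}, $R_b$ is a sound, countably iterable premouse with $\rho_\omega(R_b)=\kappa$ and $R_b\|\kappa = \mathcal{S}\|\kappa = K^c\|\kappa$, so $R_b \unrhd K^c\|\kappa$ is a legitimate stacking witness and hence $R_b \unlhd \mathcal{S}$ by the definition of $\mathcal{S} = \mathcal{S}(K^c\|\kappa)$; moreover $R_b = \Hull^{R_b}(\kappa\cup\{x_b\}) \ne \mathcal{S}$, so $R_b \lhd \mathcal{S}$ properly and $R_b\in\mathcal{S}$. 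By Lemma~\ref{lem:branchfromx} and the remark following it, $b$ is then recovered from $R_b$, $x_b$ and $s_b$ (the ``missing'' ordinal $\gamma$ of that remark being $R_b\cap\Ord$), and there are at most $(\kappa^+)^V$ choices of $R_b$, at most $(\kappa^+)^V$ of $x_b\in R_b$, and $\kappa$ of $s_b$, so this case too yields at most $(\kappa^+)^V$ branches. Hence $\mathbb{T}(\mathcal{S})$ has at most, and by the previous paragraph exactly, $(\kappa^+)^V$ branches.
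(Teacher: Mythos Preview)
Your proposal is correct and follows essentially the same route as the paper. The one substantive difference is that what you flag as ``the main obstacle'' --- the covering statement $\mathcal{S}\cap\Ord = (\kappa^+)^V$ --- the paper does not reprove but simply cites as \cite[Lemma~5.1]{JSSS09} (due to Schindler), invoking the anti-large-cardinal hypothesis (no non-domestic premouse) rather than weak compactness as the operative assumption; so you should not expect to have to develop that covering argument yourself. Your treatment of the upper bound is in fact slightly more careful than the paper's: you explicitly dispose of branches with only boundedly many active nodes, whereas the paper tacitly passes to the interesting case where Lemma~\ref{lem:directlimitofbranch} applies.
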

  \begin{proof}
    By Lemma \ref{lem:atleast2kappabranches}, $\mathbb{T}(\mathcal{S})$ has at least 
    $(\kappa^+)^{\mathcal{S}} := \mathcal{S} \cap \Ord$ many branches. 
    We argue that every branch $b$ through $\mathbb{T}(\mathcal{S})$
    in $V$ is already in $\mathcal{S}$. By our anti-large cardinal hypothesis, 
    covering holds for $\mathcal{S}$ in the sense of \cite[Lemma 5.1]{JSSS09} (due to Schindler), i.e.,
    $\mathcal{S} \cap \Ord = (\kappa^+)^V$. Therefore it follows that 
    $\mathbb{T}(\mathcal{S})$ has $(\kappa^+)^V$ many branches, as
    desired.

    Let $b$ be an arbitrary branch through the tree $\mathbb{T}(\mathcal{S})$
    and consider the direct limit $\langle R_b, x_b, s_b \rangle$ along the
    branch $b$ given by Lemma \ref{lem:directlimitofbranch}.
    Let us argue that
    $R_b \unlhd \mathcal{S}$. This suffices since we can recover $b$ (modulo some choice of $s$)
    from $R_b$ and $x_b \in R_b$ as in the proof of Lemma \ref{lem:branchfromx}. 
    $\mathcal{S}$ is countably iterable by \cite[Corollary 2.11]{JSSS09}. So Lemma
    \ref{lem:directlimitofbranch} yields that $R_b$ is a sound countably iterable
    premouse with $\rho_1(R_b) = \rho_\omega(R_b) = \kappa$. 
    Therefore $R_b \unlhd \mathcal{S}$ by definition of the stack $\mathcal{S}$. 
  \end{proof}

  \begin{claim}\label{claim: sealed tree in stack}
    $\mathbb{T}(\mathcal{S})$ is sealed.
  \end{claim}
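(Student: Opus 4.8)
The plan is to show that any forcing $\bP$ satisfying the three conditions in Definition \ref{def:sealed} cannot add a new branch through $\mathbb{T}(\mathcal{S})$. So fix such a $\bP$, let $G$ be $\bP$-generic, and suppose $b \in V[G]$ is a branch through $\mathbb{T}(\mathcal{S})$. If $b$ has only boundedly many active nodes, then $b$ is eventually determined by the almost-zero sequence coordinate and hence lies in $V$ already; so assume $b$ has unboundedly many active nodes. First I would check that $\kappa$ retains enough of its properties in $V[G]$: since $\bP \times \bP$ does not collapse $\kappa$ and preserves $\cf(\kappa) > \omega$, in particular $\bP$ itself preserves the regularity and uncountable cofinality of $\kappa$, so $\kappa$ is still a regular uncountable cardinal in $V[G]$. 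This lets us apply Lemma \ref{lem:directlimitofbranch} in $V[G]$ to form the direct limit $\langle R_b, x_b, s_b\rangle$, which is a sound, countably iterable premouse with $\rho_\omega(R_b) = \len x_b = \kappa$ and $R_b \| \kappa = \mathcal{S} \| \kappa = K^c \| \kappa$.

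The heart of the argument is then to show $R_b \unlhd \mathcal{S}$, computed in $V[G]$, and to argue that this forces $b \in V$. For the first part, I would invoke condition (3): $\bP$ adds no new sets of reals, hence adds no new bounded subsets of $\kappa$ coded by reals, and more importantly $\bP$ preserves iterability of mice and the relevant condensation/covering apparatus — this is precisely the design goal of the class of forcings in Definition \ref{def:sealed}. Concretely, $K^c$ as in \cite{ANS01} and the stack $\mathcal{S} = \mathcal{S}(K^c \| \kappa)$ are absolutely definable enough that $(K^c)^{V[G]} \| \kappa = (K^c)^V \| \kappa$ and $\mathcal{S}^{V[G]}$ restricted to height $(\kappa^+)^{V[G]}$ makes sense; the countable iterability of $\mathcal{S}$ and of $K^c$ is preserved because $\bP$ adds no new reals, so all the relevant countable substructures and their iteration strategies already exist in $V$. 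Since $R_b$ is a sound countably iterable premouse extending $K^c \| \kappa$ with $\rho_\omega(R_b) = \kappa$, by the very definition of the stack $\mathcal{S}$ we get $R_b \unlhd \mathcal{S}$ (where $\mathcal{S}$ here should be understood as the stack as recomputed in $V[G]$, which by no-new-reals covering has the same ordinal height $(\kappa^+)^V$ as in $V$, hence equals $\mathcal{S}^V$ as a premouse).

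Once $R_b \unlhd \mathcal{S}$ with $R_b \in V$ and $x_b \in R_b \subseteq V$, I would recover $b$ from $R_b$, $x_b$, $s_b$, and a choice of ordinal $\gamma$ exactly as in the proof of Lemma \ref{lem:branchfromx} and the remark following it: the active nodes of $b$ are the transitive collapses of $\Hull^{\mathcal{S}\|\gamma}(\rho \cup \{x_b\})$ for $\rho < \kappa$, for an appropriate $\gamma$ with $x_b \in \mathcal{S}\|\gamma$, $\mathcal{S}\|\gamma = \Hull^{\mathcal{S}\|\gamma}(\kappa \cup \{x_b\})$, and $\rho_1(\mathcal{S}\|\gamma) = \kappa$. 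The only piece of $b$ not determined by $R_b$ and $x_b$ is the almost-zero sequence $s_b$, but $s_b \in {}^{<\kappa}2$ has finite support and each proper initial segment $s_b \restriction \rho$ lies in $V$ (being a bounded subset of $\kappa$), so $s_b \in V$ as well since $\cf(\kappa) > \omega$ and the support is finite. Hence $b \in V$, so the branches of $\mathbb{T}(\mathcal{S})$ are not modified by $\bP$, and $\mathbb{T}(\mathcal{S})$ is sealed.

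The main obstacle I anticipate is the absoluteness bookkeeping in the second paragraph: one must be careful that $K^c$ of \cite{ANS01}, the stack operator $\mathcal{S}(\cdot)$ of \cite{JSSS09}, and the covering property of $\mathcal{S}$ (Schindler's \cite[Lemma 5.1]{JSSS09}) all survive passage to $V[G]$ for $\bP$ in this class — in particular that no new reals implies no new countable iteration trees and hence preservation of countable iterability, and that the ordinal height $(\kappa^+)^V$ of $\mathcal{S}$ is not disturbed. The definitions in Definition \ref{def:sealed} are tailored exactly so that this goes through (condition (3) gives iterability preservation, conditions (1) and (2) keep $\kappa$ and its cofinality so that Lemma \ref{lem:directlimitofbranch} applies), but spelling out why $\bP \times \bP$ rather than $\bP$ is the right hypothesis — it is needed to compare two branches coming from a single $\bP$-name, ensuring the comparison argument of Lemma \ref{lem:branchfromx} can be run inside $V[G\times G']$ without collapsing $\kappa$ — is the subtle point that deserves the most care.
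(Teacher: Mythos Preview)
Your proposal is correct and follows essentially the same route as the paper: form $R_b$ via Lemma~\ref{lem:directlimitofbranch}, argue that $R_b$ lands in the stack, and recover $b$ from $R_b, x_b, s_b$. The paper packages the absoluteness step you sketch in your second paragraph as a single citation to \cite[Corollary~3.4]{NeSt16} (building on \cite{JSSS09}), which gives $\mathcal{S} = \mathcal{S}^{V[G]}((K^c\|\kappa)^V)$ directly; your informal reasoning about why the three conditions on $\bP$ should yield this is in the right spirit.

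One small correction: your closing speculation about the role of the $\bP \times \bP$ hypothesis is off. The paper does not run a two-generic comparison of branches via Lemma~\ref{lem:branchfromx} inside $V[G \times G']$; rather, the product conditions are exactly what is needed for the Neeman--Steel absoluteness result on the stack to apply. So the product hypothesis feeds into the black-box ``$\mathcal{S}^{V} = \mathcal{S}^{V[G]}$'' step, not into any branch-versus-branch argument.
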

  \begin{proof}
    Let $\bP$ be a partial order satisfying the conditions in Definition \ref{def:sealed}
    and let $G$ be $\bP$-generic over $V$. By \cite[Corollary 3.4]{NeSt16}, building on
    Jensen's results in \cite{JSSS09}, we have that 
    $\mathcal{S} = \mathcal{S}^{V[G]}((K^c \| \kappa)^V)$, the stack of mice
    as constructed in $V[G]$ on top of $(K^c\|\kappa)^V$. This, or more precisely the generic absoluteness of being in the stack, is where we use the restrictions on the partial orders $\mathbb{P}$ in the definition of being sealed.
    Now let $b$ be an arbitrary branch through $\mathbb{T}(\mathcal{S})$ in $V[G]$
    and consider the direct limit $R_b$ of the active nodes in $b$ as before. 
    Then $R_b \in V[G]$ is a premouse and $K^c \| \kappa \unlhd R_b$.
    Note that $\mathcal{S}^{V[G]}((K^c \| \kappa)^V)$ is countably iterable in 
    $V[G]$ by construction. By Lemma \ref{lem:directlimitofbranch}, $R_b$ is sound and
    countably iterable in $V[G]$, and $\rho_1(R_b) = \rho_\omega(R_b) = \kappa$. So by the definition
    of the stack, $R_b \unlhd \mathcal{S}^{V[G]}((K^c \| \kappa)^V) = \mathcal{S}$ 
    and hence $b \in V$, as desired.
  \end{proof}

\end{proof}

We would like to end this paper with asking the following question:

\begin{question}
What is the consistency strength of the Perfect Subtree Property at weakly compact cardinals?
\end{question}

Until recently, it seemed reasonable that a weakly compact cardinal with the Perfect Subtree Property yields an inner model with a pair of cardinals $\lambda < \mu$ such that $\lambda$ is ${<}\mu$-supercompact and $\mu$ is inaccessible. By combining our construction in Section \ref{section: abstract construction} with results by Neeman and Steel in \cite{NeSt16} we can obtain some justification for this belief. But our belief is seriously questioned by recent results of Larson and Sargsyan \cite{LaSa21}. They showed from a Woodin cardinal that is a limit of Woodin cardinals that the $K^c$ construction in \cite{ANS01} (used in \cite{JSSS09}) can consistently break down. 

\bibliographystyle{plain}
\bibliography{References}

\end{document}